\numberwithin{equation}{section}
\newtheorem{theorem}{Theorem}[section]
\newtheorem{proposition}[theorem]{Proposition}
\newtheorem{corollary}[theorem]{Corollary}
\newtheorem{remark}[theorem]{Remark}
\newcommand{\cali}[1]{\mathscr{#1}}
\newcommand{\End}{{\rm End}}
\newcommand{\supp}{{\rm supp}}
\newcommand{\wi}{\widetilde}
\newcommand{\ov}{\overline}
\newcommand{\dist}{\mathop{\mathrm{dist}}\nolimits}
\newcommand{\ddc}{dd^c}
\newcommand{\dc}{d^c}
\def\Ker{\operatorname{Ker}}
\def\mP{\mathcal{P}}
\def\mQ{\mathcal{Q}}
\DeclareMathOperator{\spec}{Spec}
\newcommand{\dbar}{\overline\partial}
\newcommand{\Cc}{\cali{C}}
\newcommand{\Lc}{\cali{L}}
\newcommand{\Sc}{\cali{S}}
\newcommand{\C}{\mathbb{C}}
\newcommand{\N}{\mathbb{N}}
\newcommand{\Z}{\mathbb{Z}}
\newcommand{\R}{\mathbb{R}}
\newcommand{\norm}[1]{\lVert#1\rVert}
\newcommand{\boldsym}[1]{\boldsymbol{#1}}
\newcommand\bk{\boldsym{k}}
\newcommand\br{\boldsym{r}}
\newcommand{\comment}[1]{}
\title[Asymptotic behavior of Bergman kernels]
{On the asymptotic behavior of Bergman kernels for
\break positive line bundles}
\author{Tien-Cuong Dinh}
\address{Department of Mathematics, National University 
of Singapore, 10 Lower Kent Ridge Road, Singapore 119076.}
\email{matdtc@nus.edu.sg}
\author{Xiaonan Ma}
\address{Universit\'e Paris Diderot - Paris 7,
UFR de Math\'ematiques, Case 7012,
75205 Paris Cedex 13, France.}
\email{xiaonan.ma@imj-prg.fr}
\author{Vi{\^e}t-Anh Nguy{\^e}n}
\address{Math{\'e}matique-B{\^a}timent 425, UMR 8628, 
Universit{\'e} Paris-Sud, 91405 Orsay, France. }
\email{VietAnh.Nguyen@math.u-psud.fr}
\begin{document}

\begin{abstract} 
Let $L$ be a  positive line bundle on a projective complex manifold.
 We study the asymptotic behavior of Bergman kernels
 associated with the tensor powers $L^p$ of $L$ as $p$ tends 
 to infinity. The emphasis is the dependence of the uniform estimates
 on the positivity of the Chern form of the metric on $L$.
This situation appears naturally when we approximate a semi-positive 
singular metric by smooth positively curved metrics.
\end{abstract}

\maketitle

\medskip

\noindent
{\bf Classification AMS 2010}: 32U15 (32L05).

\medskip

\noindent
{\bf Keywords:} Bergman kernel, Dirac operator, Laplacian operator.

\section{Introduction} \label{introduction}

Let $L$ be an ample holomorphic line bundle  over 
a projective manifold $X$ 
of dimension $n$. Fix a (reference) smooth Hermitian metric $h_0$ 
on $L$ 
whose first Chern
form $\omega_0$ is a K\"ahler form. Recall that 
$\omega_0= \frac{\sqrt{-1}}{2\pi}R^L_{0}$, 
where $R^L_{0}$ is the curvature of the Chern connection on 
$(L,h_{0})$. 

Let $h^L$ be a semi-positive singular 
metric on $L$.  For various applications,
one needs to understand the asymptotic behavior of 
the Bergman kernel associated with $L^{p}$ and $h^L$
when $p$ tends to infinity. 
A natural approach is to approximate the considered metric by 
smooth positively curved metrics, 
and therefore, it is necessary to understand the dependence 
of the Bergman kernels
 in terms of the positivity of the curvature of the metric. 
 See \cite{BlockiKolodziej, Demailly92, DMN15} for the regularization 
 of metrics. This method was already used in our previous work
 on the speed of convergence of Fekete points, 
 see \cite{BBW, DMN15}.
 In  \cite[\S 2.3]{DMN15},
 inspired by \cite{Berndtsson}, an $L^1$-estimate for Bergman 
 kernels was obtained. Here, we investigate the uniform estimate
 which can be useful for applications in geometry.  
 
 Fix a smooth K\"ahler form $\theta$ on $X$ (one can take 
 $\theta=\omega_0$). 
Consider a metric $h=e^{-2\phi}h_0$ on $L$ with  weight $\phi$ 
of class $\Cc^{n+6}$ 
whose first Chern form $\omega:=\ddc\phi+\omega_0$
(here $\dc:= {\sqrt{-1}\over 2\pi }(\overline\partial -\partial)$)
satisfies 
 \begin{equation}\label{eq_strict_positivity}
\omega\geq \zeta\theta \quad \text{for some constant } 
0<\zeta\leq 1.
\end{equation}
Consider the natural metric on the space of smooth sections 
of $L^{p}$, induced by the metric $h$ on $L$ and 
the volume form $\theta^n$ on $X$, which is defined by
\begin{align}\label{eq:n2.2}
\|s\|^2_{L^2(p\phi)}:=\int_X |s(x)|_{p\phi}^2 \theta^n/n!.
\end{align}
Here, $|s(x)|_{p\phi}$ stands for the norm of $s(x)$ with respect 
to the metric $h^{\otimes p}$ on $L^p$. 
Let $\langle\cdot,\cdot\rangle_{p\phi}$ be the associated Hermitian  
product on $\Cc^\infty(X,L^{p})$, 
the space of smooth sections of $L^p$.
Let $P_p$ be the orthogonal projection from 
$(\Cc^\infty(X,L^{p}), \langle\cdot,\cdot\rangle_{p\phi})$
onto the subspace of holomorphic sections $H^0(X,L^{p})$. 
The Bergman kernel associated with the above data is the kernel 
associated with the last projection where we use
the volume form $\theta^n/n!$ to integrate functions on $X$. 
This kernel is denoted by 
$P_p(x,x')$, with $x,x'\in X$. It is a section of 
the line bundle over $X\times X$ which is the tensor product 
of two line bundles: the first one is the pull-back to $X\times X$ 
of the line bundle $L^{p}$
over the first factor, and the second one is the pull-back of
the dual line bundle $(L^*)^{p}$ of $L^{p}$
over the second factor.
In particular, its restriction to the diagonal of $X\times X$, i.e.,
$P_p(x,x)$,  can be identified to a positive-valued function on $X$. 
See \cite{MaMarinescu07} for details.
In fact, if $\{s_{j}\}_{j}$ is an orthonormal basis of
$(H^0(X,L^{p}), \langle\cdot,\cdot\rangle)$, then
\begin{align}\label{eq:n2.3}
P_{p}(x,x)= \sum_{j}|s_{j}(x)|_{p\phi}^2
=\sup \big\{ |s(x)|^2_{p\phi},\quad s\in H^0(X,L^p) 
\text{ with } \|s\|_{L^2(p\phi)}=1 \big\}.
\end{align}

Here is the main result in this paper which gives us a uniform 
estimate of 
the Bergman kernel in terms of $\phi,\omega,p$ and $\zeta$.
This is a version of Tian's theorem \cite{Tian}.
See  \cite{Berndtsson, BoS, Catlin, ComanMarinescu, 
DaiLiuMa06, Hs, MaMarinescu14, Xu12, Zelditch} for various generalizations.
We also refer to \cite{MaMarinescu07} for a comprehensive 
study of several analytic and geometric aspects of Bergman kernels.
The last reference is 
inspired by the analytic localization technique in \cite{BL}.

\begin{theorem}\label{tue4} 
Under the above assumptions, 
there exist $\delta>0$, $c>0$ satisfying the following condition:  
for any $l\in \N^*$, there is a constant $c_l>0$ 
such that for  $p\in \N^*$, 
$p \zeta>\delta$, and $x\in X$, we have
\begin{align}\label{smp1.6}
\begin{split}
\Big|p^{-n}P_{p}(x,x)- {\omega(x)^n\over\theta(x)^n}\Big| 
\leqslant & c\, |d\phi|_{{n+5}}^{2n+8}|\omega|_{0}^{4n+20}
|d\phi|_{n+2}^{2n+2} \zeta^{-2n-10} p^{-1}  \\
& +c_{l}|\omega|_{{n}}^{2n+2} (|d\phi|_{2}\zeta^{-1})^{6n+6+3l}
p^{-l} .
\end{split}\end{align}
\end{theorem}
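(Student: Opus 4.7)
The approach is to adapt the analytic localization and rescaling technique of Bismut--Lebeau and Dai--Liu--Ma (cf.~\cite{MaMarinescu07}), carefully tracking how every constant depends on the positivity parameter $\zeta$ and on $\Cc^{k}$-norms of $\phi$. The new feature compared with the standard setting is that the natural large parameter dictated by \eqref{eq_strict_positivity} is $\sqrt{p\zeta}$ rather than $\sqrt{p}$, which is why the hypothesis $p\zeta>\delta$ appears and why negative powers of $\zeta$ are unavoidable in the final bound.

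First I would establish a quantitative spectral gap for the Kodaira Laplacian $\Box_{p}:=\dbar^{L^{p},\ast}\dbar^{L^{p}}$ on $\Cc^{\infty}(X,L^{p})$. Using the Bochner--Kodaira--Nakano identity together with $\omega\geq\zeta\theta$, one obtains an inequality of the form $\Box_{p}\geq c_{1}\,p\zeta-c_{2}$ on the orthogonal complement of $H^{0}(X,L^{p})$, where $c_{1},c_{2}$ are polynomial expressions in $|\omega|_{0}$ and in low derivatives of $\phi$. This produces a spectral gap of size $p\zeta$ between $0$ and the rest of the spectrum of $\Box_{p}$, which is crucial to represent $P_{p}$ as a Schwartz function of $\Box_{p}$ (or, equivalently, as a contour integral of its resolvent) with exponentially small error.

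Next, I would localize the problem near the diagonal: by finite propagation speed for the wave operator associated with the Dirac operator, $P_{p}(x,\cdot)$ agrees, up to an error of order $p^{-l}$, with the corresponding kernel of a local model determined by the geometry on a ball of radius $O((p\zeta)^{-1/2}\log p)$ around $x_{0}$. One trivializes $L$ via $h_{0}$ in normal coordinates $Z$ centered at $x_{0}$ with respect to $\theta$, substitutes $Z=u/\sqrt{p}$, and expands the rescaled operator $\Lc_{p}:=\tfrac{1}{p}\Box_{p}$ in powers of $t=1/\sqrt{p}$. The principal part is a harmonic oscillator $\Lc_{0}$ on $\C^{n}$ built from $\omega(x_{0})$, whose Bergman kernel at the origin equals precisely $\omega(x_{0})^{n}/\theta(x_{0})^{n}$, giving the leading term in \eqref{smp1.6}; the first subleading coefficient in the $t$-expansion is responsible for the $p^{-1}$-term.

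The main obstacle, and the bulk of the work, lies in making this scheme \emph{quantitative} with the precise polynomial dependence on $|d\phi|_{k}$, $|\omega|_{k}$ and $\zeta^{-1}$ displayed in \eqref{smp1.6}; one cannot invoke the abstract expansion results of \cite{MaMarinescu07} as black boxes. Each elliptic estimate on the rescaled operator, each commutator estimate, and each inversion of the model operator $\Lc_{0}$ has to be revisited: every application of $\Lc_{0}^{-1}$ costs a factor $\zeta^{-1}$, so after composing $O(n)$ such inversions and taking Sobolev embeddings in real dimension $2n$ (which forces derivatives of $\phi$ up to order $n+5$, consistent with the assumption $\phi\in\Cc^{n+6}$), one arrives at the exponents $\zeta^{-2n-10}$ and $(|d\phi|_{2}\zeta^{-1})^{6n+6+3l}$. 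The rescaling must also be calibrated with $\zeta$ so that the Taylor remainders of $\phi$ and $\omega$ stay controllable on a ball whose rescaled radius is $\gtrsim\sqrt{p\zeta}\to\infty$, which is exactly what lets the model Bergman kernel on $\C^{n}$ dominate in the limit.
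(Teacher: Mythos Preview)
Your overall architecture---spectral gap from Bochner--Kodaira--Nakano, localization via finite propagation speed, rescaling to a model operator on $\C^{n}$ whose Bergman kernel at the origin is $\omega(x_{0})^{n}/\theta(x_{0})^{n}$, and then a Taylor expansion in $t=p^{-1/2}$ with all constants tracked---matches the paper, and you have correctly located where the real work lies. There is, however, one concrete point on which your plan departs from the paper and which the paper flags as essential.

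You propose to trivialize $L$ in \emph{normal} coordinates for $\theta$ and to extend the connection as in \cite[\S4.1.3]{MaMarinescu07}. The paper explicitly abandons that route (see the paragraph following Proposition~\ref{0t3.0} and the end of the introduction): to obtain the stated polynomial dependence on $\zeta^{-1}$ and $|d\phi|_{k}$ it works instead in \emph{holomorphic} local coordinates with a holomorphic frame $\sigma$, and builds the local model by gluing the weight $\varphi_{x_{0}}$ to its second-order Taylor polynomial outside a ball of radius $\epsilon=\epsilon_{0}A\zeta$ with $A\asymp |d\phi|_{2}^{-1}$; see \eqref{absy3.32}. This $\zeta$-calibrated cutoff is precisely what keeps the model curvature $\geq\tfrac{4}{5}\zeta$ on all of $\R^{2n}$ (estimate \eqref{1c16}, via the Taylor remainder bound \eqref{1c16a}) while rendering the model quadratic at infinity, and it is the source of the factor $(|d\phi|_{2}\zeta^{-1})^{6n+6+3l}$ in \eqref{smp1.6} through Proposition~\ref{p3.2}. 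With the normal-coordinate extension you describe, it is not clear one can preserve positivity $\geq c\zeta$ globally on $\R^{2n}$ with the same clean control, which is presumably why the authors insist a different construction is needed. A smaller discrepancy: the paper's localization radius is a fixed multiple of $\zeta$ (independent of $p$), coming from the $\zeta$-scaling built into $F_{\epsilon_{0}}$ in \eqref{bk2.8} and Remark~\ref{rk_off_diag}, rather than the $(p\zeta)^{-1/2}\log p$ you suggest; the $p^{-l}$ error then arises purely from the spectral gap and the Schwartz decay of $F_{\epsilon_{0}}$, as in Proposition~\ref{0t3.0}.
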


Note that $|\cdot|_k$ stands for $1+\|\cdot\|_{\Cc^k}$. 
As a direct consequence, we  infer the following result by taking 
$l=1$.
  
 \begin{corollary} \label{cor_order_rho} 
     There exist $\delta>0, c>0$ such that for any
     $0<\zeta\leq 1$, 
 any weight $\phi$ of class $\Cc^{n+6}$ with
 $\ddc \phi+\omega_0\geq  \zeta \theta$,
and any $p\in \N^*$ with $\zeta p>\delta$, we have 
 \begin{align}\label{eq:n2.4}
\Big|p^{-n}P_{p}(x,x) - {\omega(x)^n\over\theta(x)^n} \Big| 
\leq   c\, \zeta^{-6n-9} |d\phi|^{8n+30}_{n+5} p^{-1}.
\end{align}
 \end{corollary}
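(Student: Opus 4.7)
The plan is to specialize Theorem~\ref{tue4} to $l=1$ and to dominate each of the two resulting summands in \eqref{smp1.6} by a constant multiple of $\zeta^{-6n-9}|d\phi|_{n+5}^{8n+30}p^{-1}$. No further analytic input is needed beyond the theorem; the argument reduces to bookkeeping the exponents.

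First I would record the elementary estimate $|\omega|_k\leq C|d\phi|_{k+1}$ for every $k\geq 0$, where $C>0$ depends only on the reference data $(X,\omega_0,\theta)$. This is immediate from the identity $\omega=\omega_0+\ddc\phi$: the $\Cc^k$-norm of $\ddc\phi$ is controlled by the $\Cc^{k+1}$-norm of $d\phi$, while $\omega_0$ contributes only a fixed constant absorbed into $|d\phi|_{k+1}\geq 1$. Combined with the monotonicity $|d\phi|_j\leq |d\phi|_{j'}$ for $j\leq j'$, this lets me replace $|\omega|_0$ and $|\omega|_n$ by constant multiples of $|d\phi|_{n+5}$, and also each $|d\phi|_j$ with $j\leq n+5$ by $|d\phi|_{n+5}$.

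Applying these substitutions to the first summand of \eqref{smp1.6} yields an upper bound of the form $C|d\phi|_{n+5}^{(2n+8)+(4n+20)+(2n+2)}\zeta^{-2n-10}p^{-1}=C|d\phi|_{n+5}^{8n+30}\zeta^{-2n-10}p^{-1}$. Since $\zeta\in(0,1]$ and $2n+10\leq 6n+9$ for $n\geq 1$, this is further bounded by $C|d\phi|_{n+5}^{8n+30}\zeta^{-6n-9}p^{-1}$. For the second summand with $l=1$, the same substitutions produce $C|d\phi|_{n+5}^{(2n+2)+(6n+9)}\zeta^{-6n-9}p^{-1}=C|d\phi|_{n+5}^{8n+11}\zeta^{-6n-9}p^{-1}$, which is at most $C|d\phi|_{n+5}^{8n+30}\zeta^{-6n-9}p^{-1}$ since $|d\phi|_{n+5}\geq 1$. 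Summing the two bounds gives \eqref{eq:n2.4}, with $\delta$ unchanged and the new $c$ absorbing $c$, $c_1$, and the constant from the estimate on $|\omega|_k$.

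The argument contains no substantive obstacle. The only point requiring a sanity check is the inequality $2n+10\leq 6n+9$ used to reconcile the $\zeta$-exponents of the two summands; this is what forces the exponent $-6n-9$ (rather than the weaker $-2n-10$) in \eqref{eq:n2.4}, and it relies on the complex dimension satisfying $n\geq 1$.
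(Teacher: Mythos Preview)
Your proposal is correct and follows exactly the paper's approach: the paper simply states that the corollary is obtained from Theorem~\ref{tue4} ``by taking $l=1$,'' and you have supplied the routine exponent bookkeeping (using $|\omega|_k\lesssim|d\phi|_{k+1}$, monotonicity in the $\Cc^k$ index, and $\zeta\le 1$) that the paper leaves implicit.
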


   If  $\phi\in \Cc^{n+2k+6}$,
   we can adapt  easily 
   the proof of Theorem \ref{tue4} to get 
   the estimate for $\Cc^k$-norm
     of the left hand side  of \eqref{smp1.6}.
     Cf. Remark \ref{Rem3.10}.

The article is organized as follows. 
In Section \ref{bsys3.3}, we reduce the problem to the local setting.
In Section \ref{s3.3}, we establish Theorem \ref{tue4}. 
We need an approach different from previous ones which use 
the normal coordinates and the extension of connections on $L$, 
see \cite[\S 4.2]{DaiLiuMa06} and \cite[\S 4.1.3]{MaMarinescu07}. 
Note that throughout the paper, the constants $c,c', c_l, \ldots $ 
may be changed from line to line.

  \medskip\noindent
{\bf Acknowledgment.} 
T.-C.\ D.\  supported by Start-Up 
Grant R-146-000-204-133 from National University of Singapore.
The paper was partially written during the visits of the second and 
third authors at National University of Singapore. 
They would like to thank this university for its support and hospitality.

\section{Localization of the problem}\label{bsys3.3}

Recall that the complex structure on $X$ is given by a smooth 
section $J$ of the vector bundle $\End(TX)$ such that $-J^2$
is the identity section. Here, $TX$ denotes the real tangent bundle 
of $X$. Denote also by 
$T^{(1,0)}X$ and $T^{(0,1)}X$
the holomorphic and anti-holomorphic tangent bundles of $X$. 
They are complex vector sub-bundles of $TX\otimes_{\R} \C$. 
The K\"ahler form $\theta$ induces a Riemannian metric $g^{TX}$ 
on $X$ defined by $g^{TX}:=\theta(\cdot,J\cdot)$. 
 
Let $\overline{\partial} ^{L^{p}}$ be the $\dbar$-operator 
acting on $L^{p}$ and 
$ \overline{\partial} ^{L^{p},*}$ its dual operator 
with respect to the metric $h=e^{-2\phi} h_0$ on $L$.  
Consider the Dirac and Laplacian-type operators 
\begin{equation}\label{eq:1.6}
D_{p}:=\sqrt{2}\big( \overline{\partial} ^{L^{p}}
+ \overline{\partial} ^{L^{p},*}\big) \qquad \text{and}\qquad
\square_{p}:=\frac{1}{2} D_{p}^{2}
= \overline{\partial} ^{L^{p}}\overline{\partial} ^{L^{p},*}
+\overline{\partial} ^{L^{p},*}\overline{\partial} ^{L^{p}}.
\end{equation}
They act on 
$\Omega^{0,\bullet}(X,L^p)$, the space
of the forms of bi-degree $(0,\cdot)$ with values in $L^p$.

Let $\nabla^L$ be the Chern connection on 
$(L, h= e^{-2\phi} h_{0})$ and $R^L=(\nabla^L)^2$ its curvature 
which is related to the first Chern 
form $\omega$ by
\begin{align}\label{eq:n2.8}
\omega={\sqrt{-1}\over 2\pi} R^L.
\end{align}
Let $\nabla^{TX}$ be the Levi-Civita connection on 
$(TX, g^{TX})$. It preserves $T^{(1,0)}X,\  T^{(0,1)}X$,
and its restriction to $T^{(1,0)}X$ is the Chern connection 
$\nabla^{T^{(1,0)}X}$. Let $\nabla^{\Lambda^{0,\bullet}}$
be the connection on $\Lambda(T^{*(0,1)}X)$ induced by 
$\nabla^{T^{(1,0)}X}$, and 
$\nabla^{\Lambda^{0,\bullet}\otimes L^p}$
the connection on $\Lambda(T^{*(0,1)}X)\otimes L^p$
induced by $\nabla^{\Lambda^{0,\bullet}}$ and $\nabla^L$. 
For $u\in T^{(1,0)}X$ and $v\in T^{(0,1)}X$, let 
$u^*\in T^{*(0,1)}X$ be the metric dual of 
$u$ with respect to $g^{TX}$, define the operator $c(\cdot)$ 
depending linearly on a vector in $T^{(1,0)}X\oplus T^{(0,1)}X$ 
by setting
\begin{align}\label{eq:n2.9}
c(u):=\sqrt{2} u^*\wedge \quad \text{and} \quad 
c(v):=\sqrt{2} i_{v} \ \!,
\end{align}
where $i$ denotes, as usual, the  contraction  operator.
Then by \cite[p.31]{MaMarinescu07}, for $\{e_{j}\}$ an orthonormal 
frame of $(TX, g^{TX})$, we have
\begin{align}\label{eq:n2.10}
D_{p}= \sum_{j} c(e_{j})
\nabla^{\Lambda^{0,\bullet}\otimes L^p}_{e_{j}}.
\end{align}

Denote by $K_X^*$ the anti-canonical bundle of $X$. 
 The curvature of $K_X^*$ with respect to the above Riemannian 
 metric is denoted by $R^{K^*_X}$. 
 Then $\sqrt{-1}R^{K^*_X}$ is the Ricci curvature of $(X, g^{TX})$. 
 Let $\{w_j\}_{j=1}^n$
be a local orthonormal frame of $T^{(1,0)}X$ 
with dual frame $\{w^j\}_{j=1}^n$. Set 
\begin{align}  \label{lm4.3}
&\omega_{d}:=-\sum_{l,m} R^L (w_l,\overline{w}_m)\,
\overline{w}^m\wedge \,i_{\overline{w}_l}.
\end{align}
Recall that ${\sqrt{-1}\over 2\pi} R^L=\omega\geq \zeta\theta$. 
Then $\omega_{d}$ is a section of  $\End(\Lambda (T^{*(0,1)}X))$ 
and $R^L $ acts as the derivative $\omega_{d}$ on 
$\Lambda (T^{*(0,1)}X)$.
By \cite[(1.4.63)]{MaMarinescu07} and using that
$\langle \Delta^{0,\bullet} s,s\rangle_{p\phi}\geq 0$, 
we obtain  
for $s\in \Omega^{0, \bullet}(X,L^{p})$ that
\begin{align}\label{dm1.10}
\| D_{p} s\|_{L^2(p\phi)}^2 
= 2\langle \square_p s,s\rangle_{p\phi}
\geq 
-2p \, \left\langle  \omega_{d}s,s\right\rangle_{p\phi}
+ 2\sum_{l,m}\left\langle  R^{K^*_{X}}(w_l,\overline{w}_m)\,
\overline{w}^m\wedge \,i_{\overline{w}_l}s,s\right\rangle_{p\phi}.
\end{align}
Now by (\ref{eq_strict_positivity}), \eqref{eq:n2.8}, \eqref{lm4.3} 
and some standard arguments, see the proof 
of \cite[Theorem 1.5.5]{MaMarinescu07}, 
there exists $\delta>0$, 
depending only on the Ricci curvature $R^{K_X^*}$,
such that if $\zeta p>\delta$, then
\begin{align}\label{1c4}
\spec(D_{p}^2)\subset \{0\}\cup [2\pi \zeta p, +\infty[.
\end{align}

Let $a_X$ denote the injectivity radius of $(X,\theta).$
For $0<\epsilon_0<a_X/4$, let $f_{\epsilon_0} : \R \to [0,1]$ 
be a smooth even function such that
 \begin{align} \label{lm4.19}
f_{\epsilon_0}(v) = \left \{ \begin{array}{ll}  1 \quad {\rm for}
\quad |v| \leqslant  \epsilon_0/2, \\
  0 \quad {\rm for} \quad |v| \geqslant  \epsilon_0.
\end{array}\right.
\end{align}
Set
 \begin{eqnarray} \label{bk2.8}
F_{\epsilon_0}(a) &: = &  
\Big(\int_{-\infty}^{+\infty}f_{\epsilon_0}(v) dv\Big)^{-1} 
\int_{-\infty}^{+\infty} e ^{i v\zeta a} f_{\epsilon_0}(v) dv  
\nonumber\\
& = & \Big(\int_{-\infty}^{+\infty}
f_{\epsilon_0}(\zeta^{-1}v) dv\Big)^{-1} 
\int_{-\infty}^{+\infty} e ^{i v a} f_{\epsilon_0}(\zeta^{-1}v) dv .
\end{eqnarray}
Then $F_{\epsilon_0}(a)$ lies in Schwartz space $\mathcal{S} (\R)$ 
and $F_{\epsilon_0}(0)=1$.

\begin{proposition}\label{0t3.0} 
Let $\delta>0$ verifying (\ref{1c4}).
Then, for all $l\in \N$, $0<\epsilon_0<a_X/4$ and $F_{\epsilon_0}$ 
as above, there exists $c>0$
such that for $p\geq 1$, $\delta/p< \zeta\leq 1$, $x,x'\in X$
\begin{align}\label{1c3}
\|F_{\epsilon_0}(D_{p})(x,x') - P_{p}(x,x')\|_{L^\infty(p\phi)}
\leq c \, |\omega|_{n}^{2n+2}
 \zeta^{-6n-3l- 6}  p^{-l}.
\end{align}
\end{proposition}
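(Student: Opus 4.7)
The plan is to exploit the spectral gap \eqref{1c4} combined with the Schwartz decay of $F_{\epsilon_0}$, and upgrade the resulting $L^2$--operator bound to a pointwise bound on the kernel via Sobolev embedding. First I observe that for $\zeta p>\delta$ with $\delta$ as in \eqref{1c4}, Kodaira vanishing forces $\ker D_p\subset\Omega^{0,0}(X,L^p)=\Cc^\infty(X,L^p)$ to coincide with $H^0(X,L^p)$; hence $P_p$ is the spectral projector onto $\ker D_p$. Writing $F_{\epsilon_0}(D_p)-P_p=F_{\epsilon_0}(D_p)(I-P_p)$, the operator acts nontrivially only on the spectral part where $D_p^2\geq 2\pi\zeta p$, i.e.\ $|D_p|\geq\sqrt{2\pi\zeta p}$.

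\textbf{Spectral/operator-norm bound.} Next I compute the decay of $F_{\epsilon_0}$. The function $v\mapsto f_{\epsilon_0}(\zeta^{-1}v)$ is smooth and supported in $|v|\leq \zeta\epsilon_0$, with $\Cc^N$-norm $\lesssim\zeta^{-N}$. Integration by parts $N$ times in \eqref{bk2.8} yields, for every $N\in\N$,
\begin{equation*}
|F_{\epsilon_0}(a)|\leq C_N\zeta^{-N}(1+|a|)^{-N}.
\end{equation*}
Applied on the high-spectrum part, this gives $\|F_{\epsilon_0}(D_p)-P_p\|_{L^2\to L^2}\leq C_N\zeta^{-N}(2\pi\zeta p)^{-N/2}\lesssim\zeta^{-3N/2}p^{-N/2}$. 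Choosing $N=2l$ already produces the desired $p^{-l}$ and the $\zeta^{-3l}$ factor.

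\textbf{From operator bound to pointwise kernel bound.} To pass from the $L^2$--operator norm to $L^\infty$ control of the Schwartz kernel on $X\times X$, I would use a Sobolev embedding argument: since $\dim_\R X=2n$, for any integer $k>n/2$ one has
\begin{equation*}
\|A(x,x')\|_{L^\infty(p\phi)}\leq C\,\|(1+\square_p)^{k}A(1+\square_p)^{k}\|_{L^2\to L^2}.
\end{equation*}
Applying this with $A=F_{\epsilon_0}(D_p)-P_p$ and using functional calculus, the relevant spectral multiplier becomes $(1+\lambda^2/2)^{2k}F_{\epsilon_0}(\lambda)\chac_{\lambda\neq 0}$, whose sup-norm over $|\lambda|\geq\sqrt{2\pi\zeta p}$ is dominated, choosing $N$ large enough, by $C_{l,k}\,\zeta^{-2(2k)-3l}\,p^{-l}$. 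Taking $2k$ of order $n+1$ gives the $\zeta^{-6n-6-3l}$ factor in \eqref{1c3}.

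\textbf{Main obstacle and tracking of constants.} The delicate part is the dependence on $|\omega|_n$. The Sobolev embedding is stated in terms of the ambient Laplacian (or equivalently of $\square_p$ up to a curvature term), and to control constants I would convert the $(1+\square_p)^{k}$-Sobolev norm to an ordinary one on $\Omega^{0,\bullet}(X,L^p)\otimes\Omega^{0,\bullet}(X,(L^*)^p)$, using Bochner--Kodaira to replace $\square_p$ by $\frac{1}{2}\Delta^{\Lambda^{0,\bullet}\otimes L^p}$ plus $p\omega_d$ plus a bounded curvature term (cf.\ \eqref{dm1.10} and \eqref{lm4.3}). Iterating $2k\sim n+1$ times and commuting with the curvature factors $R^L=-2\pi\sqrt{-1}\,\omega$ brings in $n$ derivatives of $\omega$ and produces a factor $|\omega|_n^{2n+2}$ after careful bookkeeping. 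The principal difficulty is therefore not the spectral estimate itself but the uniform control of constants in the Sobolev inequality when the metric data degenerates as $\zeta\to 0$, keeping the contribution purely polynomial in $|\omega|_n$ and $\zeta^{-1}$.
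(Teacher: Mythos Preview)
Your overall strategy---spectral gap plus Schwartz decay of $F_{\epsilon_0}$, then Sobolev embedding to pass from an $L^2$--operator bound to a pointwise kernel bound---is exactly the route the paper takes. The gap is in the Sobolev step.

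The displayed inequality
\[
\|A(x,x')\|_{L^\infty(p\phi)}\leq C\,\|(1+\square_p)^{k}A(1+\square_p)^{k}\|_{L^2\to L^2}
\]
cannot hold with $C$ independent of $p$: take $A=P_p$. Since $P_p$ projects onto $\ker\square_p$, the right-hand side equals $\|P_p\|=1$, while the left-hand side is $\|P_p(x,x)\|_{L^\infty}\sim p^{n}$. So the constant blows up like $p^n$, and your subsequent spectral-multiplier computation (which gives $\zeta^{-2(2k)-3l}$ with $2k\sim n+1$, hence $\zeta^{-2n-2-3l}$) is much too optimistic---it misses precisely the $\zeta$-loss hidden in that constant. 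Your proposed fix, replacing $\square_p$ by $\tfrac12\Delta^{\Lambda^{0,\bullet}\otimes L^p}$ via Bochner--Kodaira, does not resolve this: the Bochner Laplacian is not a function of $D_p$, so the clean functional-calculus bound disappears, and commuting the curvature term $p\tau$ through $\phi_p(D_p)$ is exactly the hard bookkeeping you have not done.

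What the paper does instead is to work with \emph{ordinary} Sobolev norms in local unitary trivializations obtained by parallel transport (so that the connection form satisfies $\Gamma^L_Z=\int_0^1(i_{\mathcal R}R^L)_{tZ}\,dt$, hence $|\Gamma^L|\lesssim|\omega|_0$). The Sobolev embedding in these norms has constants depending only on $(X,\theta)$. The conversion from ordinary derivatives to powers of $D_p$ is the estimate
\[
\|s\|_{H^{m+1}}\;\leq\;c\sum_{k=0}^{m+1}p^{\,m+1-k}\,\|D_p^{k}s\|_{L^2}\,|\omega|_{m-k}^{\,m-k+1},
\]
obtained by iterating the elliptic bound $\|Ps\|_{H^1}\lesssim\|D_pPs\|_{L^2}+p|\omega|_0\|Ps\|_{L^2}$ and absorbing commutators $[P,D_p]$. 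Applied on both sides of $\phi_p(D_p)$ with $m=m'=n+1$, the factors $p^{m+1-k}$ are then killed using $|D_p|\geq\sqrt{\zeta p}$ on the high spectrum together with $\sup_a|a|^NF_{\epsilon_0}(a)\lesssim\zeta^{-N}$; each power of $p$ traded away costs three powers of $\zeta^{-1}$, which is how $\zeta^{-6n-6-3l}$ and $|\omega|_n^{2n+2}$ actually arise.
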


\begin{proof} For $a\in \R$, set
\begin{eqnarray}\label{0c2}
\phi_{p}(a) := 1_{[\sqrt{\zeta p}, +\infty[} (|a|) F_{\epsilon_0}(a).
\end{eqnarray}
By (\ref{1c4}) and (\ref{0c2}),  for $\zeta p>\delta$, we get
\begin{eqnarray}\label{0c3}
F_{\epsilon_0}(D_{p})-P_p = \phi_p(D_p).
\end{eqnarray}
By (\ref{bk2.8}), for any $m\in \N$ there exists $c>0$
such that for all $\zeta\in ]0,1[$,
\begin{equation}\label{1c9}
\sup_{a\in \R} |a|^{m} |F_{\epsilon_0} (a) | \leq c {\zeta}^{-m}.
\end{equation}
Thus, for any $m\in\N$ and $\zeta p>\delta$, we have
\begin{align}\label{1c5}
&\|(D_p)^m F_{\epsilon_0} (D_p)\|^{0,0}
:=\sup_{s\in  \Omega^{0, \bullet}(X,L^{p})\setminus\{0\}}  
{\| (D_p)^m F_{\epsilon_0} (D_p) s  \|_{L^2(p\phi)}\over 
\| s\|_{ L^2(p\phi) }}
\leq c \, \zeta^{-m}.
\end{align}

As $X$ is compact, there exists a finite set of points  
$a_i$, $1\leq i \leq r$, such that the family of balls
 $U_i := B^X(a_i,\epsilon_{0})$ of center $a_i$ and  radius 
 $\epsilon_0$, is a covering of $X$.
We identify the ball $B^{T_{a_i}X}(0,\epsilon_{0})$ in the
tangent space of 
$X$ at $a_i$ with the ball $B^{X} (a_i,\epsilon_{0})$
using  the exponential map. 
We then identify
$(TX)_Z, \Lambda (T^{*(0,1)}X)_{Z}$, $L^{p}_Z$ for $Z\in 
B^{T_{a_i}X}(0,\epsilon_{0})$
with $T_{a_i}X, \Lambda (T^{*(0,1)}X)_{a_i}$, $L^{p}_{a_i}$ 
 by parallel transport with respect to the connections 
 $\nabla ^{TX}$, $\nabla^{\Lambda^{0,\bullet}}$, 
 $\nabla^{L^{p}} $ along the curve
$\gamma_Z: [0,1]\ni u \mapsto \exp^X_{a_i} (uZ)$. 
Then $(L, h)|_{U_{i}}$ is identified as 
the trivial bundle $(L_{a_{i}}, h_{a_{i}})$.

Let $\{e_j\}_j$
be an orthonormal basis of $T_{a_i}X\simeq \R^{2n}$. 
Let $\wi{e}_j (Z)$ be 
the parallel transport of ${e}_j$ with respect to $\nabla^{TX}$ 
along the above curve.
Let $\Gamma^L , \Gamma ^{\Lambda^{0,\bullet}}$
be the corresponding connection forms of $\nabla^L $
and $\nabla^{\Lambda^{0,\bullet}}$ 
with respect to any fixed frame for
$L$ and $\Lambda (T^{*(0,1)}X)$ which is parallel along the curve 
$\gamma_Z$ under the trivialization on $U_i$.
Denote by  $\nabla_v$  the ordinary differentiation
 operator on $T_{a_i}X$ in the direction $v$. As we are working in 
 the K\"ahler case, by 
 \cite[Prop. 1.2.6, Th. 1.4.5, Rk. 1.4.8]{MaMarinescu07}, 
we can write on $U_i$
\begin{equation}\label{c10}
D_p =\sum_{j} c(\wi{e}_j) \Big( \nabla_{\wi{e}_j} 
+ p \Gamma^L (\wi{e}_j) 
+ \Gamma^{\Lambda^{0,\bullet}}(\wi{e}_j))  \Big).
\end{equation}
In fact, the last identity is a consequence of \eqref{eq:n2.10}.
Consider the radial vector field $\mathcal{R}= \sum_{j}Z_{j}e_{j}$.
By \cite[(1.2.32)]{MaMarinescu07}, the Lie derivative
$L_{\mathcal{R}}\Gamma^L$ is equal to  $i_{\mathcal{R}} R^L$. 
Therefore, we get the identity
\begin{align}\label{eq:n2.21}
\Gamma^L_{Z}= \int_{0}^{1} (i_{\mathcal{R}} R^L)_{tZ} dt,
\end{align}
which allows us to bound $\Gamma^L$. 

Let $\{ \varphi_i \}$ be a partition of unity subordinate to 
$\{U_i\}$. For $m\in \N$, we define a Sobolev norm on the $m$-th 
Sobolev space $H^m(X,\Lambda (T^{*(0,1)}X)\otimes L^{p})$ by
\begin{equation}\label{c11}
\| s\| _{H^m}^2 
= \sum_{i=1}^r \sum_{k=0}^m \sum_{j_1, \ldots, j_k=1} ^{2n}
\|\nabla_{e_{j_1}}\cdots  \nabla_{e_{j_k}}(\varphi _i s)
\|_{L^2}^2.
\end{equation}
Note that here we trivialize the line bundle $L$ using an unitary 
section; so the section $s$ above is identified with a function. 
Therefore, we drop the subscript $p\phi$ since this weight is 
already taken into account. 

By (\ref{c10}),
(\ref{eq:n2.21}) and \cite[(1.6.9)]{MaMarinescu07}, 
for $P$ a differential operator of order $m\in \N$ with
scalar principal symbol and with compact support
in $U_i$, we get
\begin{align}\label{lm4.17}
\begin{split}
\|Ps\|_{{H}^1} &  \leqslant  
c(\|D_pP s\|_{L^2} + p |\omega |_{0} \|Ps\|_{L^2}) 
\\
 & \leqslant c'\Big(\|P D_p s\|_{L^2} 
 + p \sum_{k=0}^{m} |\omega |_{k} \|s\|_{{H}^{m-k}}\Big),
\end{split}
\end{align}
for some constants $c,c'>0$. 
From (\ref{lm4.17}), we get by induction for (other) suitable 
constants $c,c'>0$ 
\begin{align}\label{c17}
\begin{split}
\|s\|_{H^{m+1}} & \leq  c \sum_{k=0}^{m+1} 
p^{m+1-k}\|D_p^k s\|_{L^2}
\prod_{\sum(k_r +1)=m-k+1} |\omega|_{k_r}  \\
&\leq c' \sum_{k=0}^{m+1}   
p^{m+1-k}\|D_p^k s\|_{L^2}|\omega|_{{m-k}}^{m-k+1}.
\end{split}
\end{align}
Note that for $k=m+1$ we set $|\omega|_{m-k}^{m-k+1}=1$. 

Let $Q$ be a differential operator of order $m'\in \N$ with
scalar principal symbol and with  compact support
in $U_j$. Using the identity
$$\big\langle D_p^{k} \phi_p(D_p)Q s, s'\big\rangle  
=\big\langle s,Q^ * \phi_p(D_p)  D_p^{k} s'\big\rangle, $$
we deduce from(\ref{c17}) with suitable sections instead of $s$ that 
\begin{align}\label{c19_bis}
\begin{split}
& \|P \phi_p( D_p) Q s\|_{L^2}\\
& \qquad \leq  c\sum_{k=0}^{m}\sum_{k'=0}^{m'} p^{m+m'-k-k'} 
\|D_p^k \phi_p( D_p)D_p^{k'}  s\|_{L^2}
|\omega|_{m-k-1}^{m-k} |\omega|_{m'-k'-1}^{m'-k'}\\
& \qquad =  c\sum_{k=0}^{m}\sum_{k'=0}^{m'} p^{m+m'-k-k'} 
\|D_p^{k+k'} \phi_p( D_p) s\|_{L^2} 
|\omega|_{m-k-1}^{m-k} |\omega|_{m'-k'-1}^{m'-k'}.
\end{split}
\end{align}
Note that the operators, considered in the last two lines, commute.
Thank to \eqref{1c4}, \eqref{0c2}, \eqref{0c3} and then  
\eqref{1c5}, if $0<\zeta\leq 1$ and  $\zeta p \geq \delta$,  
for any $q\in \N$, the main factor in the last line can be bounded 
using 
\begin{eqnarray}
 \|D_p^{k+k'} \phi_p( D_p)s\|_{L^2}   & 
 \leq &  (\zeta p)^{-q/2}\|D_p^{k+k'+q} \phi_p( D_p)s\|_{L^2}  \\
& \leq &  c (\zeta p)^{-q/2} \zeta^{-k-k'-q}   \|s\|_{L^2}. \nonumber
\end{eqnarray}
Take any $l>0$ and choose $q:=2(m+m'-k-k'+l)$. Then there exists 
$c_l>0$ such that  for $0<\zeta\leq 1$, $\zeta p \geq \delta$,
we have
\begin{align}\label{c19}\begin{split}
&\|P \phi_p( D_p) Q s\|_{L^2}\\
&\qquad  \leq  c\sum_{k=0}^{m}\sum_{k'=0}^{m'} p^{m+m'-k-k'}
 (\zeta p)^{-q/2} \zeta^{-k-q-k'}
|\omega|_{m-k-1}^{m-k} |\omega|_{m'-k'-1}^{m'-k'}\|s\|_{L^2} 
\\
&\qquad  \leq  c_{l} \zeta^{-3m-3m'-3l}  p^{-l} |\omega|_{m-1}^{m} 
|\omega|_{m'-1}^{m'} \|s\|_{L^2}.
 \end{split}
\end{align}
Finally, on $U_i\times U_j$, by using the standard Sobolev's
inequality  and  (\ref{0c3}),  we get  \eqref{1c3}.
Proposition \ref{0t3.0} follows.
\end{proof}

\begin{remark} \rm \label{rk_off_diag}
 By \eqref{bk2.8} and  the finite propagation speed of solutions
 of hyperbolic equations
\cite[Theorem D.2.1]{MaMarinescu07}, $F_{\epsilon_0}(D_p)(x, x')$ 
 only depends on the restriction of $D_p$ to\break
$B^X(x,\epsilon_{0}\zeta)$, and  
\begin{align}\label{absy3.30}
  F_{\epsilon_0}(D_p) (x,x')= 0 \quad  \text{  when   } 
  \quad  \dist(x, x') 
  \geqslant \epsilon_{0}\zeta.
\end{align}
\end{remark}

To get the uniform estimate of the Bergman kernels in terms 
of $\zeta,p$, 
we need an approach different from the use of 
the normal coordinates and the extension of connections on $L$
in \cite[\S 4.2]{DaiLiuMa06} and \cite[\S 4.1.3]{MaMarinescu07}.
Let $\psi: X\supset U\to V\subset \C^n$ be a holomorphic local 
chart such that $0\in V$ and $V$ is convex (by abuse of notation, 
we sometimes identify $U$ with $V$ and $x$ with $\psi(x)$).
Then, for any $x\in \frac{1}{2}V:=\{y\in \C^n: 2y\in V\}$,
we will use the holomorphic coordinates induced by $\psi$
and let $0<\epsilon_0\leq 1$ be such that 
$B(x, 4\epsilon_0)\subset V$
for any $x\in \frac{1}{2}V$. We choose $\epsilon_0$ smaller than 
$a_X/4$ in order to use the estimates given in the proof of 
Proposition \ref{0t3.0}.
Consider the holomorphic family of holomorphic local 
coordinates 
$\psi_{x}: \psi^{-1}(B(x, 4\epsilon_{0}))\to B(0, 4\epsilon_{0})$
for $x\in \frac{1}{2}V$ given by $\psi_{x}(y):= \psi(y)-x$.

Let $\sigma$ be a holomorphic frame of $L$ on $U$ and define 
the function 
$\varphi(Z)$ on $U$ by $|\sigma|_{\phi }^2(Z)=: e^{-2\varphi (Z)}$.
Consider the holomorphic family of holomorphic trivializations
of $L$ associated with the coordinates  
 $\psi_{x}$ and the frame $\sigma$.  
 These trivializations are given by 
 $\Psi_x:L|_{\psi^{-1}(B(x, 4\epsilon_{0}))}\to B(0, 4\epsilon_{0})
 \times \C$ with 
 $\Psi_x(y,v):=(\psi_{x}(y), v/\sigma(y))$ for $v$ a vector 
 in the fiber of $L$ over the point $y$. 

Consider a point $x_0\in{1\over 2} V$. Denote by 
$\varphi_{x_{0}}:=\varphi\circ \psi_{x_0}^{-1}$
the function $\varphi $ in local coordinates $\psi_{x_{0}}$. 
Denote also by $\varphi_{x_{0}}^{[1]}$ and 
$\varphi_{x_{0}}^{[2]}$ the first and second order 
Taylor expansions of $\varphi_{x_0}$, i.e.,
\begin{align}\label{1c14a&}\begin{split}
 \varphi_{x_{0}}^{[1]} (Z)&:= 
\sum_{j=1}^n \Big(\frac{\partial\varphi }{\partial z_j}(x_0) z_j +
\frac{\partial\varphi }{\partial \ov{z}_j}(x_0)\ov{z}_j\Big),\\
\varphi_{x_{0}}^{[2]} (Z)&:={\rm Re}\sum_{j,k=1}^n
\Big(\frac{\partial^2\varphi }{\partial z_j\partial z_k}(x_0) z_j z_{k}
+ \frac{\partial^2\varphi }{\partial z_j\partial \ov{z}_k}(x_0)  z_j
\ov{z}_k\Big),
\end{split}\end{align}
where we write $z=(z_1,\ldots,z_n)$ the complex coordinates 
of $Z$.

Let $\rho: \R\to [0,1]$ be a smooth even function such that
\begin{equation}\label{1c14}
\rho (t)=1  \  \  {\rm if} \  \   |t|<2\,; 
\quad \rho (t)=0 \   \   {\rm if} \  \ |t|>4\,.
\end{equation}
We denote in the sequel $X_0=\R^{2n}\simeq T_{x_0}X$ 
and equip $X_0$ with the metric 
$g^{TX_0}(Z):= g^{TX}(\rho(\epsilon_0^{-1}|Z|)Z)$.
Now  let $0<\epsilon<\epsilon_0$ and define
\begin{align}\label{absy3.32}
 \varphi_{\epsilon}(Z) := \rho(\epsilon^{-1}|Z|)   \varphi_{x_{0}}(Z) 
 + \big(1-\rho(\epsilon^{-1}|Z|)\big) 
\big(\varphi (x_0)+ \varphi_{x_{0}}^{[1]}(Z) 
+ \varphi_{x_{0}}^{[2]}(Z)\big).
\end{align}
Let $h^{L_{0}}_{\epsilon}$  be the metric on 
$L_0=X_0\times\C$  defined by 
\begin{align}\label{2c16}
|1|_{\varphi_\epsilon}^2(Z):= e^{-2\varphi_{\epsilon}(Z)}.
\end{align}
Here, as above, the subscript $\varphi_\epsilon$ informs the use 
of the weight $\varphi_\epsilon$.
Let $\nabla^{L_0}_{\epsilon}$ be the Chern
connection on $(L_0,h^{L_{0}}_{\epsilon})$ and
 $R_\epsilon^{L_{0}} $ be the curvature of  
 $\nabla^{L_0}_{\epsilon}$.

Then there exists a constant $A$ with $c |d\phi|_{2}^{-1}<A <1$ 
for $c>0$ such that when $\epsilon\leq A  \zeta$,
the following estimate holds  for every $x_0\in U$,
\begin{equation}\label{1c16}
\inf\Big\{\sqrt{-1} R^{L_0}_{\epsilon,Z}(u, J u)/|u|^2_{g^{TX_0}}: 
u\in T_Z X_0,\,Z\in X_0\Big\}\geqslant\frac{4}{5}\,  \zeta \,,
\end{equation}
because there exists $C>0$ such that 
for  $|Z|\leq 4\epsilon$,  $0\leq j\leq 2$, we have 
\begin{equation}\label{1c16a}
\Big|\varphi_{x_0}(Z)-\big(\varphi (x_0)
+ \varphi_{x_{0}}^{[1]}(Z)+ 
\varphi_{x_{0}}^{[2]}(Z) \big)\Big|_{\Cc^j}
\leq C |d\phi|_{2} |Z|^{3-j}.
\end{equation}

From now on, we take
\begin{align}\label{1c17}
\epsilon:= \epsilon_{0} A  \zeta. 
\end{align}
 Let $S_{x_0}$ be the unitary section of 
$(L_0,h^{L_{0}}_{\epsilon})$ that can be written as
$S_{x_0}=e^{-\tau} 1$ with  $\tau(x_0)=\varphi(x_0)$. 
So we have 
\begin{align}\label{1c18a}
\nabla^{L_0}_{Z} S_{x_0}
=i_{Z}(-d\tau -2 \partial \varphi_{\epsilon})S_{x_0}=0
\end{align}
and hence the function $\tau$ is given by
\begin{align}\label{1c17a}
\tau(Z)= \varphi (x_{0})
- 2 \int_0^1 (i_Z \, \partial \varphi_{\epsilon})_{tZ} dt.
\end{align}

Let 
\begin{align}\label{2c15}
D_p^{X_0}=\sqrt{2}(\ov{\partial}^{L_{0}^p}
+ \ov{\partial}^{L_{0}^p*}_{p\varphi_\epsilon})
\end{align}
be the Dolbeault operator on $X_0$ associated with the above data,
i.e., $\ov{\partial}^{L_{0}^p*}_{p\varphi_\epsilon}$ is the adjoint of 
 $\ov{\partial}^{L_{0}^p}$ with respect to the metrics
 $g^{TX_0}$ and $h^{L_0}_{\epsilon}$. Over the
ball $B(x_{0},2\epsilon)$, $D_p$ is just the restriction of
 $D_p^{X_0}$. 
 Now by \cite[Th. 1.4.7]{MaMarinescu07}, and observe that 
 the tensors associated with $g^{TX_0}$ do not depend
 on $\zeta$ and $\epsilon$,  as in (\ref{1c4}), 
 we get from (\ref{1c16}), the existence of a constant
$\delta>0$ such that for $\zeta p>\delta,$  
   \begin{align}\label{1u2}
\spec\,(D_p^{X_0})^2\subset 
\{0\} \cup \left[\, \zeta p,+\infty\right[.
\end{align}

 Using $S_{x_{0}}$, we get an isometry $L_0^p \simeq  \C$.
Let $P_p^{0}$ be the orthogonal projection from
$\Cc^\infty(X_0,L^{p}_0) \simeq \Cc^\infty (X_0,\C)$
on $\Ker D_p^{X_0}$. Let  $P_p^{0}(x,x')$
be the smooth kernel of $P_p^{0}$
with respect to the volume form $dv_{X_0}(x')$ induced by 
the metric $g^{TX_{0}}$.
We have the following result.

\begin{proposition} \label{p3.2} 
For all $l\in \N$, there exists $c>0$ such that 
for $\zeta p>\delta$, $x,x' \in B(x_{0},\epsilon)$,
\begin{align}\label{1c19}
\Big \|(P_p^{0}- P_p)(x,x')\Big \|_{\Cc^0}
\leq c \, (|d \phi |_{2}^{-1}\zeta)^{-6n-3l-6}  p^{-l} 
|\omega |_{{n}}^{2n+2}.
\end{align}
\end{proposition}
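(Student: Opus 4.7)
\smallskip

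My plan is to compare $P_p$ and $P_p^0$ by inserting between them the two auxiliary kernels $F_{\epsilon_0'}(D_p)(x,x')$ and $F_{\epsilon_0'}(D_p^{X_0})(x,x')$ for a suitably chosen parameter $\epsilon_0'<\epsilon_0$, and then invoke the triangle inequality. The point of this choice is that each of these two intermediate objects is close to its respective Bergman projection by Proposition \ref{0t3.0} (and its analogue on $X_0$), while the two intermediates themselves coincide on $B(x_0,\epsilon)$ by the finite propagation principle recorded in Remark \ref{rk_off_diag}.

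Concretely, I would take $\epsilon_0' := A\epsilon_0$ (which is admissible since $A<1$ and $\epsilon_0<a_X/4$). For $x,x'\in B(x_0,\epsilon)$ and $\epsilon=\epsilon_0 A\zeta$, Remark \ref{rk_off_diag} says $F_{\epsilon_0'}(D_p)(x,x')$ depends only on the restriction of $D_p$ to $B^X(x,\epsilon_0'\zeta)$, and this ball is contained in $B(x_0,\epsilon+\epsilon_0'\zeta)=B(x_0,2\epsilon)$, where $D_p$ and $D_p^{X_0}$ coincide under our trivialization. Hence $F_{\epsilon_0'}(D_p)(x,x')=F_{\epsilon_0'}(D_p^{X_0})(x,x')$ on $B(x_0,\epsilon)$. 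Next, Proposition \ref{0t3.0} applied on $X$ with $\epsilon_0'$ in place of $\epsilon_0$ controls $\|F_{\epsilon_0'}(D_p)(x,x')-P_p(x,x')\|_{L^\infty(p\phi)}$. For the $X_0$-side, the same proof runs verbatim: the spectral gap is available from (\ref{1u2}), the model metric $g^{TX_0}$ and curvature $R^{L_0}_\epsilon$ are uniformly bounded (the estimate (\ref{1c16a}) together with the cutoff (\ref{absy3.32}) gives $|\omega_\epsilon|_k\leq c|\omega|_k$ on $X_0$ for $k\le n$, with constants independent of $x_0$), and compactness is not needed since all the Sobolev estimates can be localized to the ball $B(x_0,4\epsilon_0)$ on which $\varphi_\epsilon$ differs from $\varphi_{x_0}$ in a controlled way.

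The main obstacle, and the reason one obtains $(|d\phi|_2^{-1}\zeta)^{-6n-3l-6}$ rather than $\zeta^{-6n-3l-6}$, is a careful bookkeeping of the dependence on $\epsilon_0'$ in the proof of Proposition \ref{0t3.0}. Tracking the scaling $f_{\epsilon_0'}(v)=f_1(v/\epsilon_0')$ through the Fourier integral (\ref{bk2.8}) gives $|a|^m|F_{\epsilon_0'}(a)|\leq c(\epsilon_0'\zeta)^{-m}$ instead of the $\zeta^{-m}$ used in (\ref{1c9}); this factor propagates through (\ref{1c5}) and the inductive argument leading to (\ref{c19}). Since $\epsilon_0'$ is comparable to $A$, and $A$ can be chosen comparable to $|d\phi|_2^{-1}$ as indicated after (\ref{1c16}), all occurrences of $\zeta^{-1}$ are replaced by $(A\zeta)^{-1}\simeq |d\phi|_2\zeta^{-1}$, which yields exactly the exponent in (\ref{1c19}). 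Combining the three estimates by the triangle inequality then closes the argument.
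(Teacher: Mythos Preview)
Your proposal is correct and follows the same route as the paper: replace $f_{\epsilon_0}(v)$ by $f_{\epsilon_0}(v/A)$ (equivalently take $\epsilon_0'=A\epsilon_0$), use finite propagation speed to identify the two smoothed kernels on $B(x_0,\epsilon)$, and then apply Proposition~\ref{0t3.0} on each side with $A\zeta$ in place of $\zeta$, so that every $\zeta^{-1}$ becomes $(A\zeta)^{-1}\simeq |d\phi|_2\,\zeta^{-1}$. One small caution: the global bound $|\omega_\epsilon|_k\le c\,|\omega|_k$ you assert does \emph{not} hold on the transition annulus $2\epsilon\le|Z|\le4\epsilon$ for $k\ge2$ (the cut-off $\rho(|Z|/\epsilon)$ produces factors $\epsilon^{-k}$), but since the differential operators $P,Q$ in the Sobolev argument of Proposition~\ref{0t3.0} may be chosen with support in $B(x_0,2\epsilon)$, where $\varphi_\epsilon=\varphi_{x_0}$ and hence $\omega_\epsilon=\omega$, only this local bound is actually needed---which is what your localization remark already points to.
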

\begin{proof}  
First, we replace $f_{\epsilon_0}(v)$  in \eqref{lm4.19} 
by $f_{\epsilon_0}(v/A)$.
 By Remark \ref{rk_off_diag} and  \eqref{1c17}, for 
 $x,x' \in B(x_{0},\epsilon)$, we have 
$F_{\epsilon}(D_p) (x,x')= F_{\epsilon}(D_p^0) (x,x')$.
Now we have a version of Proposition \ref{0t3.0} for $P^0_p$ 
with $A\zeta$ instead of $\zeta$. Estimate  \eqref{1c19} follows.
\end{proof}

\section{Uniform estimate of the Bergman kernels}\label{s3.3}

We continue to use the notations introduced at the end of the last 
section. 
By Proposition \ref{p3.2}, in order to study the kernel  $P_p$, 
it suffices  to study the kernel $P_p^{0}$.
For this 
purpose, we will rescale the operator $(D_p^{X_0})^2$. 
Let $dv_{TX}$ be the Riemannian volume form of
$(T_{x_0}X, g^{T_{x_0}X})$. 
Let $\kappa (Z)$ be the smooth 
positive function defined by the equation
\begin{equation}\label{c22}
dv_{X_0}(Z) = \kappa (Z) dv_{TX}(Z),
\end{equation}
with $\kappa(0)=1$.

Let $\{e_j\}_{j=1}^{2n}$ be an oriented orthonormal basis of 
$T_{x_0}X$, and let $\{e ^j\}_{j=1}^{2n}$ be its dual basis.
They allow us to identify $X_0=\C^n$ with $\R^{2n}$ and we write 
$Z=(Z_1,\ldots,Z_{2n})$. 
If $\alpha = (\alpha_1,\cdots, \alpha_{2n})$ is a multi-index,
set $Z^\alpha := Z_1^{\alpha_1}\cdots Z_{2n}^{\alpha_{2n}}$.
 Denote by  $\nabla_U$ the ordinary differentiation
 operator on $T_{x_0}X$ in the direction $U$, 
and set $\partial_j:=\nabla _{e_j}$.
Set $t:=p^{-1/2}$.
For $s \in \Cc ^{\infty}(\R^{2n}, \C)$ and $Z\in \R^{2n}$, define
\begin{equation}\label{c27}
\begin{split}
(S_{t} s ) (Z) := & s (Z/t), \quad  
\nabla_{t} :=  tS_t^{-1}\kappa ^{1/2}
\nabla ^{L^{p}_0} \kappa ^{-1/2}S_t,\\
\Lc_{t} :=&  S_t^{-1} t^2 \kappa ^{1/2}\, (D_p^{X_0})^2\,
\kappa ^{-1/2}\,S_t. 
\end{split}
\end{equation}
Once we did the trivialization of $L_{0}$ on $X_{0}$, 
\eqref{c27} is well-defined for any $p\in \R$, $p\geq 1$.

The notations $\left \langle\,\cdot\,,\cdot\,\right\rangle_{0}$ 
and $\norm{\,\cdot\,}_{0}$ mean respectively 
the inner product and the $L^2$-norm on $\Cc ^\infty (X_0, \C)$
induced by $g^{TX_0}$. 
For $s\in \Cc^{\infty}_0(X_0, \C) $, set
\begin{align}\label{u0}
\begin{split}
&\|s\|_{t,0}^2:= \|s\|_{0}^2 
= \int_{\R^{2n}} |s(Z)|^2 dv_{TX}(Z),\\
&\| s \|_{t,m}^2 := \sum_{l=0}^m \sum_{j_1,\cdots, j_l=1}^{2n}
\| \nabla_{t,e_{j_1}} \cdots \nabla_{t,e_{j_l}} s\|_{t,0}^2.
\end{split}\end{align}
We then, for convenience, denote by $\langle s,s'\rangle_{t,0}$
the inner product on 
$\Cc^\infty(X_0, L_{x_0}^{\otimes p})$ corresponding to
the norm $\| \cdot\|_{t,0}.$ Let $H^m_t$ be the  
Sobolev space of order $m$ with norm $\| \cdot \|_{t,m}.$
Let $H^{-1}_t$ be the  Sobolev space of order $-1$  and
let $\| \cdot \|_{t,-1}$ be the norm on $H^{-1}_t$
  defined by
$\|s\|_{t,-1}:=\sup_{0\neq s'\in H^1_t}  
{|\langle s,s'\rangle_{t,0}|\over \| s'\|_{t,1}}.$ 
If $B:\ H^m_t\to H^{m'}_t$ is a bounded linear operator 
for $m,m'\in\Z,$ denote by $\|B\|^{m,m'}_t$ the norm  
of $B$  with respect to the norms  $\| \cdot\|_{t,m}$ and 
$\| \cdot\|_{t,m'}.$

Theorems \ref{tu1}, \ref{tu4}, \ref{tu6} and Proposition \ref{tu5} 
below are the analogues of \cite[Th. 4.1.9-4.1.14]{MaMarinescu07}
(cf. also \cite[Th. 4.7-4.10]{DaiLiuMa06}).
The emphasis here is the precise dependence of the involved 
constants on the curvature form $\omega $.

\begin{theorem}  \label{tu1} 
There exist $c_1, c_2, c_3>0$
such that for $t \in  ]0,1]$, $\zeta \in ]0,1]$, and 
$s,s' \in \Cc ^{\infty}_0(\R^{2n}, \C)$,
\begin{equation}\label{u1}
\begin{split}
& \left \langle \Lc_{t} s,s\right \rangle_{t,0} \geqslant 
c_1\|s\|_{t,1}^2 -c_2  |\omega |_{0} \|s\|_{t,0}^2 , \\
& |\left \langle \Lc_{t} s, s'\right \rangle_{t,0}|
 \leqslant c_3   |\omega |_{0} \|s\|_{t,1}\|s'\|_{t,1}.
\end{split}
\end{equation}
\end{theorem}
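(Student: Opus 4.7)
The plan is to follow the classical proof of the Gårding-type inequality for the rescaled Dirac Laplacian (as in \cite[Th.~4.1.9]{MaMarinescu07}), but with every estimate tracked in terms of the $\Cc^0$-norm of the curvature form $\omega$. The starting point is the Bochner--Kodaira--Nakano identity of \cite[(1.4.63)]{MaMarinescu07} applied to the auxiliary data on $X_0$:
\begin{equation*}
(D_p^{X_0})^2 \;=\; -\Delta^{L_0^p,\Lambda^{0,\bullet}} \;+\; 2p\,\omega_{d,\epsilon} \;+\; r^{X_0} \;+\; 2\sum_{l,m} R^{K^*_{X_0}}(w_l,\overline{w}_m)\,\overline{w}^m\wedge i_{\overline{w}_l},
\end{equation*}
where $\omega_{d,\epsilon}$ is built from $R^{L_0}_\epsilon$ as in (\ref{lm4.3}) and $\Delta^{L_0^p,\Lambda^{0,\bullet}}$ is the Bochner (connection) Laplacian. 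Here the crucial geometric input is that $g^{TX_0}$ coincides with $g^{TX}$ only on a fixed neighborhood of $0$ and is otherwise flat, so $r^{X_0}$ and $R^{K^*_{X_0}}$ are bounded by universal constants independent of $\zeta,\epsilon,\phi$; the only ``dangerous'' term is the one carrying $R^{L_0}_\epsilon$.

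\medskip

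Next I would rescale. Conjugating the identity above by $\kappa^{-1/2}S_t$ and multiplying by $t^2=1/p$, the connection Laplacian becomes $-\sum_j (\nabla_{t,e_{j,t}})^2$ modulo a first-order term coming from the Christoffel symbols of $g^{TX_0}$ (which carry an $O(t)$ factor after rescaling), the factor $t^2\cdot 2p=2$ leaves $2\omega_{d,\epsilon}$ evaluated at $tZ$, and the remaining two terms become $t^2 r^{X_0}(tZ)$ and an analogous $O(t^2)$ Ricci term. Using the construction (\ref{absy3.32}) of $\varphi_\epsilon$ together with (\ref{1c16a}) and the choice (\ref{1c17}), the zero-order piece $\omega_{d,\epsilon}(tZ)$ is controlled pointwise by $|\omega|_0$ (with a universal multiplicative constant). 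Therefore we may write
\begin{equation*}
\Lc_t \;=\; -\sum_j (\nabla_{t,e_j})^2 \;+\; \Ac_t^{(1)} \;+\; \Ac_t^{(0)},
\end{equation*}
where $\Ac_t^{(1)}$ is a first-order operator with coefficients bounded by $|\omega|_0$ (including the $\kappa^{\pm 1/2}$ and Christoffel contributions) and $\Ac_t^{(0)}$ is a zero-order operator with pointwise norm $\leq c|\omega|_0$.

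\medskip

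For the lower bound in (\ref{u1}), I would integrate by parts on the leading Laplacian term to obtain $\sum_j \|\nabla_{t,e_j}s\|_{t,0}^2$, which by definition equals $\|s\|_{t,1}^2-\|s\|_{t,0}^2$. To dominate the cross-terms from $\Ac_t^{(1)}$ I use Peter--Paul: for any $\eta>0$,
\[
\bigl|\langle \Ac_t^{(1)}s,s\rangle_{t,0}\bigr| \;\leq\; \eta \sum_j\|\nabla_{t,e_j}s\|_{t,0}^2 \;+\; \frac{c}{\eta}|\omega|_0^2\,\|s\|_{t,0}^2,
\]
and taking $\eta$ small absorbs the first term into the main one at the cost of losing a fixed fraction, leaving a residual bound of the form $c_2|\omega|_0\|s\|_{t,0}^2$ (possibly after using $|\omega|_0\geq 1$). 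Combined with $\langle \Ac_t^{(0)}s,s\rangle_{t,0}=O(|\omega|_0)\|s\|_{t,0}^2$, this yields the asserted inequality. The upper bound is similar: write $\langle\Lc_t s,s'\rangle_{t,0}$ after integration by parts as a sum of $\sum_j\langle\nabla_{t,e_j}s,\nabla_{t,e_j}s'\rangle_{t,0}$ plus zero- and first-order couplings, and apply Cauchy--Schwarz in each, noting that $\|\nabla_{t,e_j}s\|_{t,0}\leq \|s\|_{t,1}$ while the other couplings cost a factor $|\omega|_0$.

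\medskip

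The main obstacle is the careful accounting: we must verify that every zero- and first-order coefficient in $\Lc_t$ arising from $\Gamma^L$, from $\omega_{d,\epsilon}$, from the conjugation by $\kappa^{\pm 1/2}$, and from the non-flatness of $g^{TX_0}$ at scale $t$ is genuinely controlled by $|\omega|_0$ (up to universal constants), rather than by a higher-order norm of $\phi$. This uses precisely the formula (\ref{eq:n2.21}) $\Gamma^L_Z = \int_0^1 (i_{\mathcal{R}}R^L)_{uZ}\,du$, the explicit cut-off formula (\ref{absy3.32}) for $\varphi_\epsilon$, and the bound (\ref{1c16a})—all of which ensure that the curvature $R^{L_0}_\epsilon$ on $X_0$ satisfies $|R^{L_0}_\epsilon|_0\leq C|\omega|_0$ with a universal $C$.
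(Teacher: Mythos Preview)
Your proposal is correct and follows essentially the same route as the paper, which simply invokes the Lichnerowicz formula \cite[(4.1.33)]{MaMarinescu07} and the arguments of \cite[(4.1.38)--(4.1.39)]{MaMarinescu07}; you have spelled out precisely those arguments with the $|\omega|_0$-bookkeeping made explicit. (One minor sharpening: the first-order piece $\Ac_t^{(1)}$ in fact has coefficients bounded independently of $\omega$, since it arises only from $g^{TX_0}$, the Christoffel symbols, and $\kappa$; so the Peter--Paul residual is just $c\|s\|_{t,0}^2$, and the factor $|\omega|_0$ in the lower bound of \eqref{u1} enters solely through the zero-order term $\Ac_t^{(0)}$ containing $\omega_{d,\epsilon}$.)
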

\begin{proof} By using the Lichnerowicz formula
    \cite[(4.1.33)]{MaMarinescu07}, the same arguments as in
    \cite[(4.1.38)--(4.1.39)]{MaMarinescu07} give the result.
 \end{proof}

Let $\delta_\zeta$ be the counter-clockwise oriented circle in $\C$
of center $0$ and radius $\zeta/2$.

\begin{theorem}\label{tu4} 
There exists $\delta>0$ such that  
the resolvent $(\lambda- \Lc_{t})^{-1}$ exists for 
all $\lambda \in \delta_\zeta$ and $t\in ]0, \sqrt{\zeta/\delta}]$. 
There exists $c>0$ such that for all
$t\in ]0,\sqrt{\zeta/\delta}]$,
$\lambda \in\delta_\zeta$, we have
\begin{equation}\label{ue2}
\| (\lambda- \Lc_{t})^{-1}\|^{0,0}_{t}
\leqslant 2\zeta^{-1},
\quad\| (\lambda- \Lc_{t})^{-1}\|^{-1,1}_{t}
\leqslant c\, |\omega |_{0}^2 \zeta^{-1} .
\end{equation}
\end{theorem}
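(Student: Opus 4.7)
The plan is to derive both bounds from the spectral gap \eqref{1u2} combined with the Gårding estimate of Theorem \ref{tu1}. First I would exploit that $\Lc_t$ is obtained from $t^2(D_p^{X_0})^2$ by the similarity transformation involving $\kappa^{1/2}$ and $S_t$, which preserves the spectrum and renders $\Lc_t$ self-adjoint with respect to the inner product $\langle\cdot,\cdot\rangle_{t,0}$ associated with $dv_{TX}$ (the factor $\kappa^{1/2}$ is designed precisely to intertwine $L^2(dv_{X_0})$ with $L^2(dv_{TX})$). Using $t=p^{-1/2}$, the gap \eqref{1u2} becomes $\spec \Lc_t\subset \{0\}\cup [\zeta,+\infty[$, valid once $\zeta/t^2=\zeta p>\delta$. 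Any $\lambda\in \delta_\zeta$ then lies at distance at least $\zeta/2$ from $\spec\Lc_t$, and the spectral theorem gives $\|(\lambda-\Lc_t)^{-1}\|_t^{0,0}\leq 2\zeta^{-1}$, establishing the first bound.

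For the $(-1,1)$ estimate I would first extract the coercivity of the shifted operator $A:=\Lc_t+c_2|\omega|_0$ from Theorem \ref{tu1}: $\langle As,s\rangle_{t,0}\geq c_1\|s\|_{t,1}^2$. Combined with the duality bound $|\langle As,s\rangle_{t,0}|\leq \|As\|_{t,-1}\|s\|_{t,1}$, this yields $\|A^{-1}\|_t^{-1,1}\leq c_1^{-1}$, independently of $|\omega|_0$ and $\zeta$. Combining with the $(0,0)$ bound via the algebraic identity
\[
(\lambda-\Lc_t)^{-1} = -A^{-1} + (\lambda+c_2|\omega|_0)\,A^{-1}(\lambda-\Lc_t)^{-1},
\]
obtained by right-multiplying $(\lambda-\Lc_t)=(\lambda+c_2|\omega|_0)-A$ by $(\lambda-\Lc_t)^{-1}$ and then left-multiplying by $A^{-1}$, and factoring the second term through the intermediate space $H^0_t$, gives
\[
\|(\lambda-\Lc_t)^{-1}\|_t^{-1,1} \leq c_1^{-1} + (|\lambda|+c_2|\omega|_0)\,c_1^{-1}\,\|(\lambda-\Lc_t)^{-1}\|_t^{-1,0}.
\]

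To close the loop I need $\|(\lambda-\Lc_t)^{-1}\|_t^{-1,0}$, which by self-adjointness and duality equals $\|(\bar\lambda-\Lc_t)^{-1}\|_t^{0,1}$. For this, given $u=(\bar\lambda-\Lc_t)s$, Gårding yields $c_1\|s\|_{t,1}^2\leq |\langle u,s\rangle_{t,0}|+(|\bar\lambda|+c_2|\omega|_0)\|s\|_{t,0}^2$, and the $(0,0)$ bound of the first paragraph gives $\|s\|_{t,0}\leq 2\zeta^{-1}\|u\|_{t,0}$. The right-hand side is then majorized by $C|\omega|_0\zeta^{-2}\|u\|_{t,0}^2$, producing $\|(\bar\lambda-\Lc_t)^{-1}\|_t^{0,1}\leq C|\omega|_0^{1/2}\zeta^{-1}$. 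Inserting this, together with $|\lambda+c_2|\omega|_0|\leq C|\omega|_0$ (using $|\omega|_0\geq 1$ and $|\lambda|\leq 1/2$), into the previous display produces $\|(\lambda-\Lc_t)^{-1}\|_t^{-1,1}\leq c|\omega|_0^2\zeta^{-1}$.

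The main delicacy lies in tracking the exponents of $|\omega|_0$ and $\zeta$ through the chain of estimates: coercivity contributes powers of $|\omega|_0$ via the shift, while the spectral bound contributes $\zeta^{-1}$, and it is the use of self-adjoint duality to control $\|(\lambda-\Lc_t)^{-1}\|_t^{-1,0}$ through $\|(\bar\lambda-\Lc_t)^{-1}\|_t^{0,1}$ that prevents an accumulation of extra factors of $\zeta^{-1}$ in the final exponent.
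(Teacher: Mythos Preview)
Your proof is correct and follows essentially the same approach as the paper: both use the spectral gap \eqref{1u2} for the $(0,0)$ bound, the coercivity estimate of Theorem~\ref{tu1} at a shifted real point (your $A=\Lc_t+c_2|\omega|_0$ is exactly the paper's $\lambda_0-\Lc_t$ with $\lambda_0=-c_2|\omega|_0$, up to sign and a harmless factor of $2$), and then the resolvent identity $(\lambda-\Lc_t)^{-1}=(\lambda_0-\Lc_t)^{-1}-(\lambda-\lambda_0)(\lambda_0-\Lc_t)^{-1}(\lambda-\Lc_t)^{-1}$ to transfer the $(-1,1)$ bound.

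The only point of divergence is the intermediate $\|(\lambda-\Lc_t)^{-1}\|_t^{-1,0}$ estimate. The paper obtains it by applying the resolvent identity once more (with the factors in the other order), yielding a bound of order $|\omega|_0\,\zeta^{-1}$. You instead invoke self-adjoint duality to reduce to $\|(\bar\lambda-\Lc_t)^{-1}\|_t^{0,1}$ and then apply G\aa rding directly, which gives the slightly sharper $|\omega|_0^{1/2}\,\zeta^{-1}$; after multiplying by $|\lambda+c_2|\omega|_0|\lesssim |\omega|_0$ this produces $|\omega|_0^{3/2}\,\zeta^{-1}\leq |\omega|_0^2\,\zeta^{-1}$, so the stated conclusion follows either way. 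Both routes are standard and equally short.
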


\begin{proof} 
 By \eqref{1u2} and \eqref{c27}, we have    
\begin{align}\label{1u3}
\spec\,(\Lc_{t})\subset 
\{0\} \cup \left[\, \zeta ,+\infty\right[.
\end{align}
Thus, the resolvent $(\lambda- \Lc_{t})^{-1}$ exists for 
$\lambda \in \delta_\zeta$ and $t\in ]0, \sqrt{\zeta/\delta}]$,
and we get the  first inequality of (\ref{ue2}).

By \eqref{u1},
$(\lambda_0- \Lc_{t})^{-1}$ exists for $\lambda_0\in \R$, 
$\lambda_0\leqslant -2c_2 \,|\omega |_{0}$. Moreover, 
as $c_1\|s\|_{t,1}^2
\leq -\langle(\lambda_0- \Lc_{t}) s, s\rangle_{t,0}
\leq \|(\lambda_0- \Lc_{t}) s\|_{t,-1} \|s\|_{t,1}$, we have
\begin{equation} \label{ue8_bis}
\|(\lambda_0- \Lc_{t})^{-1}\|^{-1,1}_{t}
\leqslant {1\over c_1} \cdot
\end{equation}
On the other hand, we have
\begin{equation}\label{ue7}
(\lambda- \Lc_{t})^{-1}= (\lambda_0- \Lc_{t})^{-1}
- (\lambda-\lambda_0) (\lambda- \Lc_{t})^{-1}
(\lambda_0- \Lc_{t})^{-1}.
\end{equation}
Therefore,  for  $\lambda\in \delta_\zeta$, from
the first estimate in (\ref{ue2}) and (\ref{ue7}), we get
\begin{equation}\label{ue8}
\|(\lambda-\Lc_{t})^{-1}\|^{-1,0}_{t}  \leq
\frac{1}{c_1} \left(1+2 |\lambda-\lambda_0|\zeta^{-1}\right).
\end{equation}
In  \eqref{ue7}, we can interchange the last two factors. Then, 
applying \eqref{ue8_bis} and \eqref{ue8}  gives 
\begin{align}\label{ue9}
\|(\lambda-\Lc_{t})^{-1}\|^{-1,1}_{t}  
\leqslant  \frac{1}{c_1}+ \frac{|\lambda-\lambda_0|}{{c_1}^2} 
 \left(1+2 |\lambda-\lambda_0|\zeta^{-1}\right)
\leqslant c\, |\omega |_{0}^2\, \zeta^{-1}.
\end{align}
The theorem follows.
\end{proof}

\begin{proposition} \label{tu5} 
Take $m \in \N^*$. There exists 
$c>0$ such that for $t\in ]0,1]$, $Q_1, \ldots, Q_m$
$\in \{ \nabla_{t,e_j}, Z_j\}_{j=1}^{2n}$ and  
$s,s'\in\Cc^{\infty}_{0}(X_0, \C)$, 
\begin{equation}\label{ue11} 
\left |\left \langle [Q_1, [Q_2,\ldots 
[Q_m,  \Lc_{t}]\ldots]\,]s, s'\right
\rangle_{t,0} \right | \leqslant c |d\phi |_{m+1}^{\min (2,m)}
\|s\|_{t,1} \|s'\|_{t,1}. 
\end{equation}
\end{proposition}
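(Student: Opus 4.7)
The plan is to expand $\Lc_t$ explicitly as a second-order differential operator in $Z$, propagate its structure through iterated commutators by induction on $m$, and convert the resulting operator bound into a bilinear estimate via integration by parts.

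First, using the K\"ahler Lichnerowicz formula $(D_p^{X_0})^2 = \Delta^{L_0^p \otimes \Lambda^{0,\bullet}} + 2p\omega_d + r_{X_0}$ (cf.\ \cite[Th.\ 1.4.5]{MaMarinescu07}), the radial-gauge expansion $\Gamma^{L_0}_{e_i}(tZ) = \tfrac{t}{2}R^{L_0}_0(Z, e_i) + O(t^2)$ of the connection form in the unitary trivialization by $S_{x_0}$, and the conjugation-plus-rescaling defining $\Lc_t$, one writes
$$\Lc_t = \sum_{i,j} a^{ij}(Z,t) \nabla_{t,e_i}\nabla_{t,e_j} + \sum_i b^i(Z,t) \nabla_{t,e_i} + c(Z,t),$$
where $a^{ij}$ is bounded independently of $\phi$, the $\Cc^k$-norm of $b^i$ is controlled by a constant times $|d\phi|_{k+2}$, and the $\Cc^k$-norm of $c$ by a constant times $|d\phi|_{k+2}^2$. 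The quadratic growth in $c$ traces back to the $(p\Gamma^{L_0})^2$-terms inside the Bochner Laplacian, which after the $t^2p^2$-rescaling become polynomial in $Z$ with coefficient quadratic in $R^{L_0}_0$.

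Next, I record the elementary commutator identities
\begin{align*}
[\nabla_{t,e_i}, Z_j] &= \delta_{ij}, \qquad [\nabla_{t,e_i},\nabla_{t,e_j}] = R^{L_0}(e_i, e_j)(tZ), \\
[\nabla_{t,e_i}, f(tZ)] &= t(\partial_{e_i}f)(tZ),
\end{align*}
where the second follows from $[\nabla^{L_0^p}_{e_i}, \nabla^{L_0^p}_{e_j}] = p R^{L_0}(e_i, e_j)$ combined with $t^2p=1$, and $R^{L_0}$ acts as a multiplication operator bounded by $|\omega|_0 \leq c|d\phi|_1$. I then argue by induction on $m$ that $[Q_1, \ldots, [Q_m, \Lc_t] \ldots]$ remains a differential operator of order at most $2$ whose $\Cc^0$-coefficients are bounded by a constant times $|d\phi|_{m+1}^{\min(2,m)}$. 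The inductive step splits by the type of $Q_{m+1}$: commuting with $Z_k$ merely contracts a $\nabla_{t,e_i}$ into the constant $\delta_{ki}$ and does not raise the $d\phi$-power; commuting with $\nabla_{t,e_l}$ either differentiates a coefficient (raising $k$ by one, accompanied by an extra $t$-factor) or replaces a pair $\nabla_{t,e_l}\nabla_{t,e_i}$ by the multiplication $R^{L_0}(e_l, e_i)(tZ)$, which can add at most one factor of $|d\phi|_1$. The saturation at exponent $2$ encodes the fact that no coefficient of $\Lc_t$ is more than quadratic in $d\phi$.

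Finally, to turn the operator bound into the claimed bilinear estimate, I integrate by parts to move one $\nabla_{t,e_i}$ onto $s'$, using that in the unitary trivialization $\nabla_{t,e_i}^* = -\nabla_{t,e_i} + (\text{bounded multiplication})$, with the bounded correction coming from the density $\kappa$. This yields $|\langle C\,\nabla_{t,e_i}\nabla_{t,e_j} s, s'\rangle_{t,0}| \leq c|C|_{\Cc^1} \|s\|_{t,1}\|s'\|_{t,1}$ for the second-order terms, and the first- and zeroth-order terms are handled directly by Cauchy--Schwarz. The hard part will be the precise bookkeeping of $\min(2,m)$: one must verify that, within a single commutator step, one never both differentiates a coefficient and simultaneously extracts an extra curvature factor via $[\nabla_{t,e_l}, \nabla_{t,e_i}]$ in a way that would produce a third factor of $|d\phi|_1$, which is what forces the exponent to stabilize at $2$ for $m \geq 2$ and to remain at $1$ when $m=1$.
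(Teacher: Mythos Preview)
Your overall strategy---write $\Lc_t$ as a second-order operator in $\nabla_{t,e_i}$, propagate the structure through iterated commutators, then integrate by parts---is exactly the paper's approach (which simply cites \cite[(1.6.31) and Prop.\ 1.6.9]{MaMarinescu07} for this structure). However, there is a concrete error in your initial structural claim that would prevent you from reaching the sharp exponent $\min(2,m)$.

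You assert that the zeroth-order coefficient $c$ of $\Lc_t$ is quadratic in $|d\phi|$, tracing this to ``$(p\Gamma^{L_0})^2$-terms inside the Bochner Laplacian''. But once you express the Bochner Laplacian through $\nabla_{t,e_i}$ (which already absorbs the connection form $p\Gamma^{L_0}$), there is no separate $(p\Gamma^{L_0})^2$ contribution: those terms only reappear if you expand $\nabla_{t,e_i}$ back into $\partial_i+\text{connection}$, which defeats the purpose. In the Lichnerowicz formula the genuine zeroth-order piece is $t^2(2p\,\omega_d + r_{X_0}+\ldots)$; after $t^2p=1$ this is \emph{linear} in $R^{L_0}$, hence linear in $|d\phi|$. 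Likewise the first-order coefficients $b^i$ come only from Christoffel symbols of $g^{TX_0}$ and from the $\kappa^{1/2}$-conjugation, so they are $\phi$-independent. This is precisely the content of the paper's statement that ``$a_{ij}(t,Z)$ and its derivatives are uniformly bounded'' while ``$d_j(t,Z),\,c(t,Z)$'' carry the $|d\phi|$-dependence.

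The error matters for $m=1$: if $c$ were quadratic then $[\nabla_{t,e_l},c]=t(\partial_{e_l}c)(tZ)$ would also be quadratic, and your bound would be $|d\phi|_2^2$ rather than the required $|d\phi|_2$. The correct invariant, preserved under each commutator with $Q\in\{\nabla_{t,e_j},Z_j\}$, is: second-order coefficients $\phi$-independent; first-order coefficients at most linear in $|d\phi|$; zeroth-order coefficients at most linear for $\Lc_t$ itself and for a single commutator, becoming at most quadratic only from $m\geq 2$ onward via the mechanism you correctly isolate, namely (linear first-order coefficient)$\times R^{L_0}(e_l,e_i)(tZ)$ from $[\nabla_{t,e_l},\nabla_{t,e_i}]$. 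With this corrected starting data your induction and integration-by-parts argument go through as written.
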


\begin{proof} By \cite[(1.6.31)]{MaMarinescu07} and 
as in the proof of \cite[Proposition 1.6.9]{MaMarinescu07}, 
we know that 
$[Q_1, [Q_2,\ldots  [Q_m,  \Lc_{t}]\ldots]\,]$ has the same 
structure as $\Lc_{t}$ for $t\in [0,1]$.
More precisely, it has the form
\begin{align}\label{lm4.50}
\sum_{i,j} a_{ij}(t,tZ) \nabla_{ t,e_i}\nabla_{ t,e_j}
+\sum_{j}d_{j}(t,tZ) \nabla_{ t,e_j} + c(t,tZ),
\end{align}
where $a_{ij}(t,Z)$ and its derivatives in $Z$ are uniformly bounded,
$d_{j}(t,Z),  c(t,Z)$ and their first derivatives in $Z$
are bounded by $c |d\phi|_{m+1}^{\min (2,m)}$
for $Z\in \R ^{2n}$ and $t\in [0,1]$ and a constant $c>0$.
We then get estimate (\ref{ue11}).
\end{proof}

\begin{theorem}\label{tu6} For  
$Q_1, \ldots, Q_m\in \{\nabla_{t,e_j}, Z_j\}_{j=1}^{2n}$,
 there exists $c>0$ such that we have for 
 $t\in ]0, \sqrt{\zeta/\delta}]$, $\lambda \in \delta_\zeta $ and 
 $s\in \Cc ^\infty_{0} (X_0,\C)$,
\begin{multline}\label{ue12}
\|Q_1\cdots Q_m (\lambda-\Lc_{t})^{-1} s\|_{t,1}\\
\leqslant c  \sum_{k=0}^{m}\sum_{1\leq j_1<\cdots< j_k\leq m}
|d\phi|_{m-k+1}^{m-k}  (|\omega |_{0}^2\zeta^{-1})^{m-k+1} 
\|Q_{j_1}\cdots Q_{j_k} s\|_{t,0} .
\end{multline}
\end{theorem}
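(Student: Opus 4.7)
The plan is to expand $Q_1\cdots Q_m(\lambda-\Lc_t)^{-1}$ into a finite sum of terms in which every $Q_i$ has been pushed to the right of all occurrences of $R:=(\lambda-\Lc_t)^{-1}$, either surviving as a ``free'' factor acting ultimately on $s$, or absorbed into a nested commutator with $\Lc_t$. The engine is the identity $Q_i R = R Q_i + R[Q_i,\Lc_t]R$, which is an immediate consequence of the resolvent identity. I would apply it repeatedly to each $Q_i$ in turn; whenever a $Q_j$ already commuted to the right of some $R$ still lies to the left of a commutator block $[Q_{i_1},[\ldots[Q_{i_r},\Lc_t]\ldots]]$, I expand $Q_j\cdot[\ldots] = [\ldots]\cdot Q_j + [Q_j,[\ldots]]$, either leaving $Q_j$ free or merging it into the block. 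After finitely many such steps, $Q_1\cdots Q_m R$ is a sum indexed by a subset $\{j_1<\cdots<j_k\}\subset\{1,\ldots,m\}$ of surviving indices together with a grouping of the complementary $m-k$ indices into consecutive nested commutator blocks $T_1,\ldots,T_N$ of sizes $r_1+\cdots+r_N=m-k$, with representative summand
\begin{equation*}
R\,T_1\,R\,T_2\cdots R\,T_N\,R\,Q_{j_1}\cdots Q_{j_k},\qquad T_l=[Q_{i^l_1},[\ldots[Q_{i^l_{r_l}},\Lc_t]\ldots]].
\end{equation*}

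To bound each summand, Theorem \ref{tu4} gives $\|R\|_t^{-1,1}\leq c|\omega|_0^2\zeta^{-1}$, which in particular implies $\|R\|_t^{0,1}\leq c|\omega|_0^2\zeta^{-1}$ via the continuous inclusion $\|\cdot\|_{t,-1}\leq\|\cdot\|_{t,0}$. Proposition \ref{tu5} translates to $\|T_l\|_t^{1,-1}\leq c|d\phi|_{r_l+1}^{\min(2,r_l)}$. Composing these norms from right to left in the representative summand bounds its $\|\cdot\|_{t,1}$-norm by
\begin{equation*}
c\,(|\omega|_0^2\zeta^{-1})^{N+1}\prod_{l=1}^{N}|d\phi|_{r_l+1}^{\min(2,r_l)}\cdot\|Q_{j_1}\cdots Q_{j_k}s\|_{t,0}.
\end{equation*}
Since $|d\phi|_k$ is increasing in $k$, $r_l+1\leq m-k+1$, and $\sum_l\min(2,r_l)\leq\sum_l r_l=m-k$, the product of $|d\phi|$-factors is at most $|d\phi|_{m-k+1}^{m-k}$. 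Similarly, $|\omega|_0\geq 1$ and $\zeta\leq 1$ give $|\omega|_0^2\zeta^{-1}\geq 1$, and $N\leq m-k$ yields $(|\omega|_0^2\zeta^{-1})^{N+1}\leq(|\omega|_0^2\zeta^{-1})^{m-k+1}$. Summing over the finitely many subsets and groupings absorbs the combinatorial count into $c$ and gives \eqref{ue12}.

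The main obstacle is not the estimates themselves---which are a straightforward composition of Theorem \ref{tu4} and Proposition \ref{tu5}---but rather the bookkeeping of the expansion: one must verify by induction on $m$ that the normal form above is indeed attained, with only iterated commutators of the shape $[Q_{i_1},[\ldots[Q_{i_r},\Lc_t]\ldots]]$ appearing. Bare commutators $[Q_i,Q_j]$ between two $Q$'s alone never arise, because each $Q_i$ is only ever commuted past $R$ or past a block that already contains $\Lc_t$; once this combinatorial normal form is in place, the remaining estimates are mechanical.
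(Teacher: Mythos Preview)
Your proposal is correct and follows essentially the same route as the paper: the paper first writes $Q_1\cdots Q_m(\lambda-\Lc_t)^{-1}$ as a combination of terms $[Q_{j_1},[\ldots,[Q_{j_{m_1}},(\lambda-\Lc_t)^{-1}]\ldots]]\,Q_{j_{m_1+1}}\cdots Q_{j_m}$ and then expands each iterated commutator with the resolvent, via $[Q,R]=R[Q,\Lc_t]R$, into chains $R\,S_1\,R\cdots S_{m_2}\,R$ with $S_i\in\Sc_t$, which is exactly your normal form $R\,T_1\,R\cdots T_N\,R\,Q_{j_1}\cdots Q_{j_k}$ reached by a slightly different bookkeeping. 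The estimates are then identical, combining $\|R\|_t^{-1,1}\leq c|\omega|_0^2\zeta^{-1}$ from Theorem~\ref{tu4} with $\|T_l\|_t^{1,-1}\leq c|d\phi|_{r_l+1}^{\min(2,r_l)}$ from Proposition~\ref{tu5} and majorizing as you do.
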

\begin{proof} For 
$Q_1, \ldots, Q_m\in \{\nabla_{t,e_j}, Z_j\}_{j=1}^{2n}$,
we can express $Q_1\cdots$ $Q_{m}(\lambda-\Lc_{t})^{-1}$ 
as the sum of $(\lambda-\Lc_{t})^{-1}Q_1\cdots$ $Q_{m}$ with
a linear combination  of operators of the type 
\begin{equation}\label{ue13} 
[Q_{j_1}, [Q_{j_2},\ldots  [Q_{j_{m_1}},
(\lambda-\Lc_{t})^{-1}]\ldots ]\,]\,
Q_{j_{m_1+1}} \cdots Q_{j_m},
\end{equation}
with $j_1<j_2\cdots <j_{m_1}$, $j_{m_1+1}< \cdots <j_m$.
The coefficients of this combination are bounded when $m$ 
is bounded. Let $\Sc_{t}$ be the family of operators 
$$[ Q_{j_1},[Q_{j_2},
\ldots [Q_{j_l},\Lc_{t}]\ldots ]\,] =-[ Q_{j_1},[Q_{j_2},
\ldots [Q_{j_l},\lambda-\Lc_{t}]\ldots ]\,] .$$  
Note that 
\begin{equation*}
[Q, (\lambda-\Lc_{t})^{-1}]= 
- (\lambda-\Lc_{t})^{-1} [Q, \lambda-\Lc_{t}]  (\lambda-\Lc_{t})^{-1}
=(\lambda-\Lc_{t})^{-1} [Q, \Lc_{t}]  (\lambda-\Lc_{t})^{-1},
\end{equation*}
thus by the recurrence on $m_1$ we know that every commutator 
$[Q_{j_1}, [Q_{j_2},\ldots  [Q_{j_{m_1}},(\lambda-\Lc_{t})^{-1}]
\ldots ]\,]$ is a linear
combination  of operators of the form 
\begin{equation}\label{ue14}
(\lambda-\Lc_{t})^{-1}S_1(\lambda-\Lc_{t})^{-1} S_2\cdots
S_{m_2}(\lambda-\Lc_{t})^{-1} 
\end{equation} with  $S_1, \ldots, S_{m_2} \in
\Sc_{t}$ and $m_2\leq m_1$. The coefficients of this combination 
are bounded when $m_1$ is bounded.

From Proposition \ref{tu5} we deduce that the 
 $\norm{\,\cdot\,}_{t}^{1,-1}$ norms of the operators\break
 $[ Q_{j_1},[Q_{j_2},\ldots [Q_{j_l},\Lc_{t}]\ldots ]\,]$ are
uniformly bounded from above by a constant times
$|d\phi |_{l+1}^l$.
Hence, by Theorem \ref{tu4}, the 
$\norm{\,\cdot\,}_{t}^{0,1}$ norm of the operator  \eqref{ue14} 
is bounded by a constant times
$$\zeta^{-m_{2}-1}
|\omega |_{0}^{2m_{2}+2}
\sum_{l_1+\cdots+l_{m_2}=m_1\atop l_1,\ldots,l_{m_2}\geq 1}
\prod_{j=1}^{m_2}|d\phi |_{l_{j}+1}^{\min(2,l_{j})}.$$
The theorem follows.
\end{proof}

Let 
$\mP_{t}:(\Cc^{\infty}(X_0,\C), \norm{\,\cdot\,}_0)\to
\Ker(\Lc_{t})$ be the orthogonal projection corresponding to  
the norm $\|\cdot\|_{t,0}$ given in  (\ref{u0}).
Let $\mP_{t}(Z,Z')$, 
(with $Z,Z'\in X_0$) be the smooth kernel of $\mP_{t}$ 
 with respect to $dv_{TX}(Z')$. Note that
$\Lc_{t}$ is a family of differential operators on $T_{x_0}X$ 
with coefficients in $\C$. 
Let $\pi : TX\times_{X} TX \to X$ be the
natural projection from the fiberwise product of $TX$ 
with itself on $X$. 
We can view  $\mP_{t}(Z,Z')$ as  smooth 
functions over $TX\times_{X} TX$
by identifying a section $F\in \Cc^\infty (TX\times_{X}TX, \C)$
with the family $(F_{x_0})_{x_0\in X}$, where
$F_{x_0}:=F|_{\pi^{-1}(x_0)}$.
In the following result we adapt 
\cite[Theorem 4.1.24]{MaMarinescu07} to the present situation.

\begin{theorem}\label{tue8}
 For any $r\in\N$, $\sigma>0$, there exists $c>0$,
 such that for $t\in ]0,\sqrt{\zeta/\delta}]$ and 
 $Z,Z'\in T_{x_0}X$ with $|Z|,|Z'|\leqslant \sigma$,
 \begin{align}\label{ue15}
&\Big\|\frac{\partial^{r}}{\partial t^{r}}
\mP_{t}\left (Z, Z'\right )\Big \|_{\Cc^{0}(X)}   
\leqslant c \zeta^{-2n-4r-2}
 |d\phi  |_{2r+n+1}^{4r+2n} |\omega |_{0}^{8r+4n+4}
 |d\phi |_{n+2}^{2n+2}.
\end{align}
\end{theorem}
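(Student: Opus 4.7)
The plan is to follow the blueprint of \cite[Theorem 4.1.24]{MaMarinescu07} but carefully track the dependence of every constant on $\zeta$, $|\omega|_{0}$ and the $\Cc^{k}$-norms of $\phi$. First I would represent the spectral projection via a contour integral: by \eqref{1u3} and Theorem \ref{tu4}, for $t\in\,]0,\sqrt{\zeta/\delta}]$,
\[ \mP_{t}=\frac{1}{2\pi i}\oint_{\delta_{\zeta}}(\lambda-\Lc_{t})^{-1}\,d\lambda, \]
since $0$ is the only point of $\spec(\Lc_{t})$ enclosed by $\delta_{\zeta}$. Differentiating $r$ times in $t$ and iterating the identity $\partial_{t}(\lambda-\Lc_{t})^{-1}=(\lambda-\Lc_{t})^{-1}(\partial_{t}\Lc_{t})(\lambda-\Lc_{t})^{-1}$ via the Leibniz rule, I would express $\partial_{t}^{r}\mP_{t}$ as a finite linear combination, indexed by compositions $k_{1}+\cdots+k_{j}=r$ with $j\leq r$, of contour integrals of products
\[ (\lambda-\Lc_{t})^{-1}\,\Lc_{t}^{(k_{1})}\,(\lambda-\Lc_{t})^{-1}\cdots\Lc_{t}^{(k_{j})}\,(\lambda-\Lc_{t})^{-1},\qquad \Lc_{t}^{(k)}:=\partial_{t}^{k}\Lc_{t}, \]
involving at most $r+1$ resolvents and $r$ factors $\Lc_{t}^{(k_{i})}$.

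Next, from the explicit form \eqref{lm4.50}, each coefficient of $\Lc_{t}$ is a smooth function of $t$ and $tZ$ built from the Taylor expansion of $\varphi_{\epsilon}$ and the rescaled metric; differentiating in $t$ preserves this structure, so $\Lc_{t}^{(k)}$ is again of the form \eqref{lm4.50} with coefficients whose $\Cc^{m}$-norms are controlled by a universal power of $|d\phi|_{m+k+1}$. The same commutator argument used in Proposition \ref{tu5} then yields
\[ \big\|[Q_{1},[Q_{2},\ldots,[Q_{m},\Lc_{t}^{(k)}]\cdots]]\big\|^{1,-1}_{t}\leq c\,|d\phi|_{m+k+1}^{\min(2,m+k)}, \]
and the proof of Theorem \ref{tu6} transports verbatim to chains of resolvents with $\Lc_{t}^{(k_{i})}$ inserted. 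Combined with Theorem \ref{tu4}, each resolvent contributes at most $c\,|\omega|_{0}^{2}\zeta^{-1}$ in the $(-1,1)$-framework.

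To pass from operator-norm estimates to the pointwise bound on $\mP_{t}(Z,Z')$, I would use the standard trick on $\R^{2n}$: since $|Z|,|Z'|\leq\sigma$, multiplying by compactly supported cut-offs and applying the Sobolev embedding $H^{n+1}\subset \Cc^{0}$ in each variable separately, the kernel pointwise is controlled by the $(0,0)$-norm of $Q_{1}\cdots Q_{n+1}\,\partial_{t}^{r}\mP_{t}\,Q'_{1}\cdots Q'_{n+1}$ for $Q_{\alpha},Q'_{\beta}\in\{\nabla_{t,e_{j}},Z_{j}\}$. Moving these $2(n+1)$ differentiations through the chain via the commutator/resolvent identity as in the proof of Theorem \ref{tu6}, one expands the expression as a sum of products of the form
\[ (\lambda-\Lc_{t})^{-1}\,S_{1}\,(\lambda-\Lc_{t})^{-1}\cdots S_{N}\,(\lambda-\Lc_{t})^{-1}, \]
where $N\leq 2r+n+1$ and each $S_{i}$ is either $\Lc_{t}^{(k_{i})}$ or an iterated commutator of $Q$'s with such. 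Taking the $(-1,1)$-norm of each of the $N+1$ resolvents gives a factor $(|\omega|_{0}^{2}\zeta^{-1})^{2(2r+n+1)}=|\omega|_{0}^{8r+4n+4}\zeta^{-4r-2n-2}$; the $r$ time-differentiations together with the $Q$-commutators produce the factor $|d\phi|_{2r+n+1}^{4r+2n}$, while the separate factor $|d\phi|_{n+2}^{2n+2}$ arises from the Sobolev step where only $m\leq n+1$ commutators of $Q$'s with $\Lc_{t}$ are invoked.

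The main obstacle will be the combinatorial bookkeeping of exponents: one must verify that the worst-case terms in the multi-sum over compositions $k_{1}+\cdots+k_{j}=r$ and over subsets of commuted $Q$'s are precisely those yielding \eqref{ue15}, and that the factors of $|d\phi|$ produced by the Sobolev embedding separate cleanly from those produced by the time derivatives, so that the $r$-dependent regularity order $2r+n+1$ appears only in one place and the $r$-independent order $n+2$ appears in the other. A secondary technical point is rigorously verifying the coefficient bounds for $\Lc_{t}^{(k)}$ announced above, which rests on the construction \eqref{absy3.32} of $\varphi_{\epsilon}$ together with the remainder estimate \eqref{1c16a}.
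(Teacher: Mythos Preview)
Your overall strategy parallels the paper's argument, but there is a real gap in the second paragraph. You claim that ``differentiating in $t$ preserves this structure, so $\Lc_{t}^{(k)}$ is again of the form \eqref{lm4.50} with coefficients whose $\Cc^{m}$-norms are controlled by a universal power of $|d\phi|_{m+k+1}$''. This is not so: the coefficients of $\Lc_{t}$ are functions of $(t,tZ)$, and the chain rule $\partial_{t}\bigl(f(tZ)\bigr)=Z\cdot(\nabla f)(tZ)$ produces an explicit factor of $Z$ at each $t$-differentiation; moreover, by \eqref{0c40} one has $\partial_{t}\nabla_{t,e_{i}}=O(|Z|^{2})$, so after $r$ derivatives the coefficients of $\partial_{t}^{r}\Lc_{t}$ carry polynomial factors $Z^{\beta}$ with $|\beta|$ as large as $2r$. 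These coefficients are \emph{unbounded} on $X_{0}=\R^{2n}$, so $\Lc_{t}^{(k)}$ does \emph{not} have finite $\|\cdot\|_{t}^{1,-1}$-norm, and the resolvent--$S_{i}$--resolvent chain you propose cannot be estimated factor by factor as you suggest.

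The paper repairs exactly this point: before estimating $A^{\bk}_{\br}(\lambda,t)Q'$, one commutes each individual $Z_{i}$ appearing in the coefficients of the $\partial_{t}^{r_{i}}\Lc_{t}$ all the way to the right (one $Z_{i}$ at a time, never a full $Z^{\beta}$), arriving at an expression $\sum_{|\beta|\leq 2r}\Lc_{\beta,t}Q''_{\beta}Z^{\beta}$ in which every $\Lc_{\beta,t}$ is a chain \eqref{abk2.38} whose insertions now have genuinely bounded coefficients; only then do the resolvent bounds of Theorem~\ref{tu4} apply, and the residual $Z^{\beta}$ is harmless because it finally acts on functions supported in $|Z|\leq\sigma+1$ (this is where $\sigma$ enters, giving \eqref{1ue30}). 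Two further corrections: the paper represents $\mP_{t}$ via $\frac{1}{2\pi i}\oint_{\delta_{\zeta}}\lambda^{k-1}(\lambda-\Lc_{t})^{-k}d\lambda$ with $k>2(r+1)+m+m'$ rather than $k=1$, so that enough resolvent factors are already present for the inductive bound \eqref{ue18_bis}; and the factor $|d\phi|_{n+2}^{2n+2}$ is not generated by commutators with $\Lc_{t}$ but by the norm comparison \eqref{1ue21} between $\|\cdot\|_{t,n+1}$ and the ordinary Sobolev norm, which is needed before one may invoke $H^{n+1}\hookrightarrow\Cc^{0}$.
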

\begin{proof} 
By (\ref{1u3}), for every $k\in \N^*$,
\begin{align}\label{1ue15}
&\mP_{t}= \frac{1}{2\pi \sqrt{-1}}  
\int_{\delta_\zeta} \lambda^{k-1} 
(\lambda - \Lc_{t})^{-k} d \lambda.
\end{align}
For $m\in \N$, let $\mQ ^m$ be the set of operators
 $\nabla_{t,e_{i_1}}\cdots    \nabla_{t,e_{i_j}}$ with $j\leqslant m$.
We apply Theorem \ref{tu6} to $m-1$ operators $Q_2,\ldots, Q_m$ 
instead of $m$ operators.  We deduce that for $l,m\in \N^*$ with 
$l\geq m$, and $Q=(Q_1,\ldots,Q_m)\in \mQ ^{m}$,  
there are $c,c'>0$ such that for 
$t\in ]0,\sqrt{\zeta/\delta}]$, $\zeta\in [0,1]$,
$s\in \Cc ^\infty_{0} (X_0,\C)$, and $\lambda\in\delta_\zeta $
\begin{eqnarray}\label{ue18}
\lefteqn{\| Q_1\cdots Q_m (\lambda-\Lc_{t})^{-l} s\|_{t,0}
\  \leqslant \  c \| Q_2\cdots Q_m (\lambda-\Lc_{t})^{-l} s\|_{t,1} }
\nonumber\\
&\qquad \leqslant & c'\sum_{k=0}^{m-1}
\sum_{1<i_1<\cdots< i_k\leq m}
|d\phi |_{{m-k}}^{m-k-1} (|\omega |_{0}^2\zeta^{-1} )^{m-k}
\|Q_{i_1}\cdots Q_{i_k}
(\lambda-\Lc_{t})^{-l+1} s\|_{t,0}.
\end{eqnarray}
Then, by induction and using \eqref{ue2}, we get 
\begin{align}\label{ue18_bis}
\| Q_1\cdots Q_m (\lambda-\Lc_{t})^{-l} s\|_{t,0} 
\leqslant   c \zeta^{-m-l+1} |d\phi  |_{{m}}^{m-1}
|\omega |_{0}^{2m} \|s\|_{t,0} \, .
\end{align}
As $\Lc_{t}$ is symmetric, we
can consider the adjoint of the operator in \eqref{ue18_bis} and 
get for $Q'=(Q_1',\ldots, Q_{m'}')\in \mQ^{m'}$, 
\begin{align}\label{ue18_ter}
&\|  (\lambda-\Lc_{t})^{-l} QÕ_1\ldots Q'_{m'} s\|_{t,0}
\leqslant   c \zeta^{-m'-l+1} |d\phi  |_{{m'}}^{m'-1}
|\omega |_{0}^{2m'} \|s\|_{t,0} \, .
\end{align}

Note that for $m=0$ and $l\in\N$ we also have 
$\|(\lambda-\Lc_{t})^{-l} s\|_{t,0} \leq c \zeta^{-l}\|s\|_{t,0}$.
Thus, for $Q\in \mQ^{m}, Q'\in \mQ^{m'}$ with $m,m'>0$,
 by taking $k= m+m'$, we get 
    \begin{align}\label{ue21}
\begin{split}
\|Q \mP_{t}Q' \|^{0,0}_{t} & \leqslant   \frac{1}{2\pi } 
\int_{\delta_\zeta } |\lambda|^{m+m'-1}
\|Q(\lambda - \Lc_{t})^{-m-m'}Q'\|^{0,0}_{t} |d \lambda|  \\
& \leqslant  c \,  |d\phi  |_{{m}}^{m-1}|\omega |_{0}^{2m}
  |d\phi  |_{{m'}}^{m'-1}|\omega |_{0}^{2m'}
  \zeta^{-2m-2m'+2}  \zeta^{m+m'}\\
  & =  c \,  |d\phi  |_{{m}}^{m-1}|\omega |_{0}^{2m}
  |d\phi  |_{{m'}}^{m'-1}|\omega |_{0}^{2m'}
  \zeta^{-m-m'+2}.
\end{split}
\end{align}

By \cite[Lemma 1.2.4]{MaMarinescu07}, (\ref{1c18a}),
(\ref{1c17a}) and (\ref{c27}) ,
on $B^{T_{x_0}X}(0, \epsilon/t)$,
 \begin{align}\label{0c40}
 &\nabla_{t, e_i}|_{Z} = \nabla_{e_i} + 
 \frac{1}{2}R^{L}_{x_{0}}(Z,e_{i}) 
 + O(t |Z|^{2}) |d\phi |_{2}.
 \end{align}
Let $|\,\cdot\,|_{(\sigma),m}$ denote the usual Sobolev norm on
 $\Cc^\infty(B^{T_{x_0}X}(0,\sigma+1),
 \C)$ induced by 
 the volume form $dv_{TX}(Z)$ as in (\ref{u0}).  
 Observe that by
(\ref{u0}),  (\ref{0c40}),  for $m>0$, 
 there exists $c>0$ such that for $s\in
 \Cc^\infty (X_0, \C)$ with $\supp (s) \subset B(0,\sigma +1)$, 
 \begin{align}\label{1ue21}
 &\frac{1}{c  |d\phi |_{m+1}^m}   \|s\|_{t,m}
 \leqslant |s|_{(\sigma),m}
 \leqslant c  |d\phi |_{m+1}^m\|s\|_{t,m}.
 \end{align}

Now, we want to estimate  $Q_Z Q'_{Z'} \mP_{t}(Z,Z')$ using 
the standard Sobolev's inequality for $Q \in \mQ ^{m}$ and  
$Q' \in \mQ ^{m'}$. If we define 
$S:=Q\mP_tQ'$ then we have for $|Z|, |Z'|\leq \sigma$
\begin{multline} \label{1ue23_bis}
|Q_Z Q'_{Z'} \mP_{t}(Z,Z') | \leq c  \sup
 \Big\{ \Big\| \frac{\partial^{|\alpha|} }{\partial {Z}^\alpha} 
 S \frac{\partial^{|\alpha'|} s}{\partial {Z'}^{\alpha'}}  
 \Big\|_{(\sigma),n+1}, \\
 \|s\|_{L^2}=1, \supp(s)\subset B(0,\sigma+1), |\alpha|,|\alpha'|
 \leq n+1 \Big\}.
\end{multline}
Hence, by 
\eqref{1ue21}, applied twice to $n+1$ instead of $m$, 
and also \eqref{ue21}, applied to $m+n+1, m'+n+1$ instead of 
$m,m'$,  we get
\begin{equation}\label{1ue23}
\begin{split}
& \sup_{|Z|,|Z'|\leqslant \sigma}| Q_Z Q'_{Z'} \mP_{t}(Z,Z') | \\
& \qquad \leqslant c'\, |d\phi |_{{m+n+1}}^{m+n}  
|\omega |_{0}^{2m+2n+2}  
|d\phi|_{m'+n+1}^{m'+n}  
|\omega |_{0}^{2m'+2n+2}  
|d\phi |_{n+2}^{2n+2}  \zeta^{-m-m'-2n}\,.
\end{split}
\end{equation}
By \eqref{0c40} and \eqref{1ue23} for $m=m'=0$,
 estimate \eqref{ue15} holds for $r=0$.

Consider now $r\geq 1$. Set
\begin{equation}\label{ue29}
I_{k,r} := \Big \{ (\bk,\br)=\{(k_i,r_i)\}_{i=0}^j: 
\sum_{i=0}^j k_i =k+j, \sum_{i=1}^j r_i =r,\, \, 
k_i, r_i \in \N^*\Big \}.
\end{equation}
Then there exist $a^{\bk}_{\br} \in \R$ such that
\begin{equation}\label{ue30}
\begin{split}
& A^{\bk}_{\br}  (\lambda,t) = (\lambda-\Lc_{t})^{-k_0}
\frac{\partial^{r_1}\Lc_{t}}{\partial t^{r_1}}
(\lambda-\Lc_{t})^{-k_1}\cdots
\frac{\partial^{r_j}\Lc_{t}}{\partial t^{r_j}}  
(\lambda-\Lc_{t})^{-k_j},\\
& \frac{\partial^{r}}{\partial t^{r}}
(\lambda-\Lc_{t})^{-k}=
\sum_{(\bk,\br)\in I_{k,r} }
 a ^{\bk}_{\br}  A ^{\bk}_{\br}  (\lambda,t).
\end{split}
\end{equation}
Set $g_{ij}(Z):= \langle\frac{\partial}{\partial Z_i}, 
\frac{\partial}{\partial Z_j}\rangle_Z$,
and $(g^{ij})$ the inverse matrix of $(g_{ij})$.
Note that $\frac{\partial^{u}}{\partial t^{u}}(g^{ij}(tZ))$,
$\frac{\partial^{u}}{\partial t^{u}}(\nabla_{ t,e_i} 
- \frac{1}{t}\Gamma^L (tZ))$
are functions which do not depend on $\zeta$,
and $\frac{\partial^{u}}{\partial t^{u}} R^L_\zeta(tZ)$,
$\frac{\partial^{u}}{\partial t^{u}} \Gamma^L (tZ)$
are functions of type $d'(tZ)Z^\beta$,
and $\nabla_{e_{j_1}}\cdots \nabla_{e_{j_l}} d'(tZ)$
is uniformly controlled by $|d\phi  |_{l+u+1}$.

We handle now the operator $A ^{\bk}_{\br}(\lambda,t)Q'$. 
We will move first all the terms $Z^\beta$ in $d'(tZ)Z^\beta$
(defined above) to the right hand side of this operator.  To do so,
we always use the commutator trick as in the proof of 
\cite[Theorem 1.6.10]{MaMarinescu07}, i.e., each time,
we perform only the commutation with $Z_i$
(not directly with $Z^\beta$ with $|\beta|>1$).
Then $A ^{\bk}_{\br}(\lambda,t)Q'$ is as the form
$\sum_{|\beta|\leqslant 2r} \Lc_{\beta,t} Q''_\beta  Z^\beta$, 
and $Q''_\beta$
 is obtained from $Q'$ and its commutation with $Z^\beta$.
 Observe that $[Z_i,\Lc_t]$ is a first order differential operator
 and $[Z_{j_1},[Z_{j_2},\Lc_t]]= g^{j_1 j_2}(tZ)$
 is a bounded function. Therefore, $\Lc_{\beta,t}$ is 
 a linear combination of operators of the form
\begin{align}\label{abk2.38}
 (\lambda-\Lc_t)^{-k'_0}S_1(\lambda-\Lc_t)^{-k'_1}S_2
\cdots S_{l'}(\lambda-\Lc_t)^{-k'_{l'}},
\end{align}
with  $S_i\in \{ a(tZ)\nabla_{e_{t, j_1}} \nabla_{e_{t, j_2}} , 
d_{j_1}(tZ)\nabla_{e_{t, j_1}}, d'_\zeta(tZ)\}$
and the number of $\nabla_{e_{t, j_1}}$ in all $\{S_i\}_i$ is
 less than $\sum_i r_i +2j=r+2j$. As $k>2(r+1)+m+m'$, 
 we can split the above operator into two parts
as in \cite[(4.1.51)]{MaMarinescu07} and use the fact that 
 the term $\nabla_{e_{t, j}} (\lambda-\Lc_{t})^{-l_1}$ 
 will contribute $\zeta^{-l_1}$. Similarly to
 (\ref{ue18}), we get that
$A^{\bk}_{\br}(\lambda,t)$ is well defined and
for $m, m'\in \N$, $k>2(r+1)+ m+m'$, 
$Q\in \mQ^{m},Q'\in \mQ^{m'}$,
there exists $c>0$ such that for $\lambda\in \delta_\zeta$ and 
$t\in ]0,\sqrt{\zeta/\delta}]$,
    \begin{align}\label{1ue30}
\begin{split}
&\left\|Q  A ^{\bk}_{\br}(\lambda,t)Q' s\right\|_{t,0} \\
&\quad \leqslant c|d\phi |_{{m+2r}}^{m+2r-1} 
|\omega |_{0}^{2m+4r}  
|d\phi  |_{{m'+2r}}^{m'+2r-1}  |\omega |_{0}^{2m'+4r} \
\zeta^{-\sum_{i=0}^{j}k_{i} -m-m'-3r} 
 \sum_{|\beta|\leqslant 2r} \|Z^\beta s\|_{t,0} \\
& \quad\leq  c |d\phi  |_{{m+2r}}^{m+2r-1}  
|\omega |_{0}^{2m+4r}  
|d\phi  |_{{m'+2r}}^{m'+2r-1} 
|\omega |_{0}^{2m'+4r} \zeta^{-k -m-m'-4r} 
\sum_{|\beta|\leqslant 2r} \|Z^\beta s\|_{t,0}.
\end{split}\end{align}

 By  (\ref{1ue15}), (\ref{ue30}) and (\ref{1ue30}), as in  
 (\ref{ue21}), for $m,r\in \N$,  $Q\in \mQ^m$ and $Q'\in \mQ^{m'}$,
there exists $c>0$ such that for $t\in ]0,\sqrt{\zeta/\delta}]$ and
 $s\in  \Cc^{\infty}_0(X_0, \C) $,
 \begin{multline}\label{1ue31}
 \left\|Q  \frac{\partial^{r}}{\partial t^{r}} 
 \mP_{t} Q' s\right\|_{t,0}  
 \leqslant  c  |d\phi  |_{{m+2r}}^{m+2r-1}  
  |d\phi |_{{m'+2r}}^{m'+2r-1} 
  |\omega |_{0}^{2m+2m'+ 8r} \zeta^{-m-m'-4r} 
\sum_{|\beta|\leqslant 2r} \|Z^\beta s\|_{t,0}.
 \end{multline}

Finally, \eqref{1ue21} and \eqref{1ue31}  together
 with Sobolev's inequalities imply for 
 $|Z|, |Z'|\leqslant \sigma$,
 \begin{equation}\label{ue23}
 \sup_{|Z|,|Z'|\leqslant \sigma}\left| 
 \frac{\partial^{r}}{\partial t^{r}} \mP_{t}(Z,Z') \right| 
 \leqslant c |d\phi |_{{2r+n+1}}^{2n+4r}
 |\omega |_{0}^{2(2n+2+4r)} 
 |d\phi  |_{n+2}^{2n+2} \zeta^{-2n-4r-2} \,.
 \end{equation}
 This ends the proof of the theorem.
\end{proof} 
     
    For $k$ big enough, set
    \begin{equation}\label{ue31}
    \begin{split}
    & F_{r}:= \frac{1}{2\pi \sqrt{-1} \, r! } 
      \int_{\delta_\zeta}
    \lambda ^{k-1}   \sum_{(\bk,\br)\in I_{k,r} }
     a ^{\bk}_{\br}  A ^{\bk}_{\br}  (\lambda,0)d \lambda .
    \end{split}
    \end{equation}
 Let $F_{r}(Z,Z')\in \Cc^\infty (TX\times_{X}TX,\C)$ 
be the smooth kernel of $F_{r}$  with respect to $dv_{TX}(Z')$.
   
\begin{theorem} \label{tue14} 
For all $j\in \N$, $\sigma>0$, 
there exists $c>0$  such that for  $t\in ]0,\sqrt{\zeta/\delta}]$ 
and $Z,Z'\in T_{x_0}X$,  $|Z|, |Z'|\leqslant \sigma$,
we have 
\begin{eqnarray}\label{0ue45}
\ \ \ \qquad  \Big\|\Big (\mP_{t} 
 - \sum_{r=0}^j  F_{r}t^r\Big ) (Z,Z') \Big\|_{\Cc ^{0}(X)} 
 & \leqslant &  c  |d\phi |_{{2j+n+3}}^{2(2j+n+2)} 
  |\omega |_{0}^{2(4j+2n+6)}  |d\phi |_{n+2}^{2n+2}
  \zeta^{-4j-2n- 6}  t^{j+1} .
\end{eqnarray}
\end{theorem}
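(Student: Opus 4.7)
The idea is to compare $\mP_t$ with the partial Taylor sum $\sum_{r=0}^j F_r t^r$ via Taylor's theorem with integral remainder applied to the operator family $(\lambda-\Lc_t)^{-k}$, then to control the remainder by the estimate \eqref{1ue30}. I would fix $k$ large enough (e.g.\ $k > 2(j+2) + 2(n+1)$) so that \eqref{1ue30} is applicable with $m = m' = n+1$ and $r = j+1$. By \eqref{ue30}, the operator $F_r$ is precisely $\tfrac{1}{r!}\partial_t^r \mP_t|_{t=0}$, so Taylor's theorem yields
\begin{equation*}
\mP_t - \sum_{r=0}^{j} F_r\, t^r
= \frac{t^{j+1}}{2\pi\sqrt{-1}\,j!}
\int_{\delta_\zeta}\lambda^{k-1}\int_0^1(1-u)^{j}
\Bigl[\tfrac{\partial^{j+1}}{\partial s^{j+1}}(\lambda-\Lc_s)^{-k}\Bigr]_{s=ut}du\,d\lambda.
\end{equation*}

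Next, I would follow the end of the proof of Theorem \ref{tue8}. By \eqref{ue30}, the bracketed operator is a finite sum of terms $A^{\bk}_{\br}(\lambda, ut)$ with $\sum r_i = j+1$. Applying \eqref{1ue30} with $m=m'=n+1$ and $r=j+1$ (the estimate is uniform in $s=ut\in]0,\sqrt{\zeta/\delta}]$) and then integrating in $\lambda \in \delta_\zeta$ (which contributes a factor $\sim \zeta^k$ from $\int|\lambda|^{k-1}|d\lambda|$, cancelling the $\zeta^{-k}$ appearing in \eqref{1ue30}) yields, for $Q, Q' \in \mQ^{n+1}$ and $s'\in\Cc^\infty_0(X_0,\C)$,
\begin{equation*}
\Bigl\|Q \Bigl(\mP_t - \sum_{r=0}^j F_r t^r\Bigr) Q' s'\Bigr\|_{t,0}
\leqslant c\,|d\phi|_{n+2j+3}^{2(n+2j+2)}\,|\omega|_{0}^{4n+8j+12}\,\zeta^{-2n-4j-6}\,t^{j+1}\!\!\sum_{|\beta|\leqslant 2(j+1)}\!\!\|Z^\beta s'\|_{t,0}.
\end{equation*}
Sobolev's inequality on $B(0,\sigma+1)$ combined with \eqref{1ue21} (applied twice to pass between $\|\cdot\|_{t,n+1}$ and the unweighted $|\cdot|_{(\sigma),n+1}$, exactly as in \eqref{1ue23_bis}--\eqref{1ue23}) then converts this operator-norm bound into the pointwise $\Cc^0$-kernel estimate for $|Z|, |Z'|\leqslant \sigma$, producing the remaining factor $|d\phi|_{n+2}^{2n+2}$ stated in the theorem.

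The hard part will be the careful bookkeeping of exponents — threading together the powers of $\zeta$, $|\omega|_0$ and the various $|d\phi|_k$ arising from \eqref{1ue30} with the above choice of $m,m',r$, the Cauchy integral contribution in $\lambda$, and the Sobolev conversion — and verifying that $m = m' = n+1$ (just enough for Sobolev embedding in real dimension $2n$) is compatible with the constraint $k > 2(j+2) + m + m'$ needed for \eqref{1ue30}, which indeed holds for $k$ chosen large enough once and for all. A minor additional point is that the identification $F_r = \tfrac{1}{r!}\partial_t^r\mP_t|_{t=0}$ must be justified via a Leibniz-type rule for resolvents, but this is an immediate consequence of the construction leading to \eqref{ue30}.
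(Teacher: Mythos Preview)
Your proposal is correct and follows essentially the same route as the paper. The paper's version is a bit more economical: it applies the Taylor formula with integral remainder directly to the kernel function $t\mapsto \mP_t(Z,Z')$ and then invokes Theorem~\ref{tue8} (with $r=j+1$) to bound $\partial_{t_0}^{\,j+1}\mP_{t_0}(Z,Z')$ uniformly for $t_0\in]0,\sqrt{\zeta/\delta}]$, whereas you unpack the same content at the resolvent level and redo the Sobolev step---but the substance and the resulting exponents are identical.
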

\begin{proof} 
  By \cite[(4.1.69)]{MaMarinescu07},	we have
     \begin{equation}\label{0ue47}
    \frac{1}{r!}  \frac{\partial^{r}}{\partial t^{r}} \mP_{t} 
    \Big|_{t=0} = F_{r}\,.
    \end{equation}
    Recall that the Taylor expansion with integral rest of 
    a function $G\in\Cc^{j+1}([0,1])$ is
    \begin{equation}\label{0ue46}
    G(t)- \sum_{r=0}^j \frac{1}{r!} 
    \frac{\partial ^r G}{\partial t^r}(0) t^r
    = \frac{1}{j!}\int_0^t (t-t_0)^j 
    \frac{\partial ^{j+1} G}{\partial t^{j+1} }(t_0) dt_0\,,
    \quad t\in[0,1]\,.
    \end{equation}
    Theorem \ref{tue8} and \eqref{0ue47} show that 
    the estimate \eqref{ue15} holds if we replace  
    $\frac{1}{r!}\frac{\partial^{r}}{\partial
    t^{r}}\mP_{t}$ with $F_{r}$.
    Using this new estimate 
    together with  \eqref{0ue46} and \eqref{ue15}, 
    we obtain \eqref{0ue45}.
    \end{proof}

Let $ \mP$ be the orthogonal  projection from 
$L^2(X_0,  \C)$ 
onto $\Ker(\Lc_0),$ and let $ \mP(Z,Z')$  be the 
smooth kernel of  $ \mP$ with respect to $dv_{TX}(Z').$ 
Then $ \mP(Z,Z')$ is  the Bergman kernel of $\Lc_0.$  
By \cite[(4.1.84)]{MaMarinescu07}, if we choose $\{w_{j}\}$ to be 
an orthonormal basis of $T^{(1,0)}_{x_{0}} X$ such that 
$\dot{R}^L_{x_{0}}= {\rm diag}(a_{1},\cdots, a_{n})
\in \End(T^{(1,0)}_{x_{0}} X)$ 
with $\left\langle  \dot{R}^L_{x_{0}} W,\overline{Y}\right\rangle
= R^L(W,\overline{Y})$ for $W,Y\in T^{(1,0)}_{x_{0}} X$,
then 
\begin{equation}\label{bk2.71}
\begin{split}
\mP(Z,Z') &=\prod_{i=1}^n
\frac{a_i}{2\pi}\:\:\exp\Big(-\frac{1}{4}\sum_i
a_i\big(|z_i|^2+|z^{\prime}_i|^2 -2z_i\overline{z}_i'\big)\Big).
\end{split}
\end{equation}
The following result was established in 
\cite[Theorem 4.1.21]{MaMarinescu07}.
  
    \begin{theorem} \label{0t3.6} 
    There exist polynomials 
$J_r(Z,Z')$ in $Z,Z'$ with the same parity as $r$ and 
$\deg J_r(Z,Z')\leqslant 3r$, whose coefficients  
    are polynomials in $R^{TX}$  {\rm(} resp. $R^L${\rm)}  
    and their derivatives of order $\leqslant r-2$ 
    {\rm(}resp. $\leqslant r${\rm)}, 
    and reciprocals of linear combinations of eigenvalues of 
 $R^L$ at $x_0$\,, such that    
    \begin{align}\label{c86}
    &F_{r}(Z,Z')= J_{r}(Z,Z')\mP(Z,Z').
    \end{align}
    Moreover, we have
    \begin{equation}\label{1c52}   \begin{split}
    &J_0=1 \quad \text{and} \quad F_{0}=\mP.   
     \end{split} 
\end{equation}
    \end{theorem}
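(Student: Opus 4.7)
The plan is to Taylor-expand $\Lc_t = \Lc_0 + \sum_{r\geq 1} t^r\Oc_r$ at $t=0$, substitute into the expansion \eqref{ue30} of $(\lambda-\Lc_t)^{-k}$, and compute the contour integral \eqref{ue31} using the explicit spectral structure of the model operator $\Lc_0$. By Taylor-expanding the connection forms and the volume density appearing in \eqref{c27} (see \eqref{0c40}), each $\Oc_r$ is a second-order differential operator whose coefficients are polynomials in $Z$ of degree at most $r+2$ and of parity equal to that of $r$, the numerical coefficients being derivatives at $x_0$ of order $\leq r-2$ of $R^{TX}$ and of order $\leq r$ of $R^L$. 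The model $\Lc_0$ is a sum of harmonic oscillators on $\C^n$ with spectrum of the form $\{2\pi\sum a_i n_i:n_i\in\N\}$, where $a_i$ are the eigenvalues of $\dot{R}^L_{x_0}$, and the kernel of its orthogonal projector is $\mP(Z,Z')$ from \eqref{bk2.71}.

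The key structural input is the Laurent decomposition $(\lambda-\Lc_0)^{-1} = \lambda^{-1}\mP + R(\lambda)$ near $\lambda=0$, where $R(\lambda)$ is holomorphic at $\lambda=0$ and acts on the excited eigenspace of eigenvalue $\mu>0$ by the scalar $(\lambda-\mu)^{-1}$ times the spectral projector onto that eigenspace. Each such eigenspace is spanned by Hermite-type functions of the form (polynomial in $Z$)$\cdot\mP(Z,\cdot)$, and both multiplication by polynomials in $Z$ and differentiation in $Z$ preserve this \emph{polynomial times $\mP$} structure. Consequently each composite $A^{\bk}_{\br}(\lambda,0)$ appearing in \eqref{ue30}, applied to $\mP$, produces a finite sum of expressions $P(Z,Z')\mP(Z,Z')$, whose numerical coefficients are rational in the $a_i$ with the only possible denominators being the nonzero eigenvalues $\mu = 2\pi\sum a_i n_i$, i.e.\ linear combinations of eigenvalues of $R^L$ at $x_0$.

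Evaluating the contour integral \eqref{ue31} by residues at the unique interior singularity $\lambda=0$ (for $k$ large) kills the $\lambda$-dependence and produces $F_r = J_r\,\mP$ with the prescribed coefficient structure. The base case $r=0$ reduces to the identity $\frac{1}{2\pi\sqrt{-1}}\int_{\delta_\zeta}\lambda^{k-1}(\lambda-\Lc_0)^{-k}d\lambda = \mP$, so $J_0 = 1$. The parity statement follows at once from $\sum r_i = r$ and the fact that each $\Oc_{r_i}$ has parity $r_i$, while $\mP(Z,Z')$ is even in $(Z,Z')$ jointly.

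The delicate point, and the main obstacle, is the sharp degree bound $\deg J_r\leq 3r$. A naive count gives degree $\sum (r_i+2)$ from the $\Oc_{r_i}$-factors, which exceeds $3r$ once the composition length $j$ is large. To achieve the optimal bound one must exploit the relation $\sum k_i = k+j$ from \eqref{ue29} together with the explicit Hermite-polynomial calculus for $R(0)$: the excess degree produced by the ``$+2$'' in each $\Oc_{r_i}$ is absorbed by the second-derivative part of the next $\Oc_{r_{i+1}}$ (or by $R(0)$, which is scalar on each eigenspace and so adds no degree). Carrying out this bookkeeping term-by-term, in the spirit of the proof of Theorem 4.1.21 of \cite{MaMarinescu07}, yields precisely $\deg J_r\leq 3r$ and completes the proof.
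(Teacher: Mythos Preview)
The paper does not prove this theorem at all: it simply cites \cite[Theorem 4.1.21]{MaMarinescu07} and moves on. Your sketch is therefore not to be compared against anything in the paper, but against the argument in the cited reference, which you are essentially outlining. The overall architecture---Taylor-expand $\Lc_t$, plug into the resolvent expansion, evaluate the contour integral by residues using the explicit harmonic-oscillator spectral data of $\Lc_0$---is indeed the Ma--Marinescu approach, and your description of the coefficient structure, the parity, and the base case $J_0=1$ is correct.

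The one place where your sketch does not match the actual mechanism is the degree bound $\deg J_r\leq 3r$. Your explanation that ``the excess degree produced by the `$+2$' in each $\Oc_{r_i}$ is absorbed by the second-derivative part of the next $\Oc_{r_{i+1}}$'' is not how the bound is obtained in \cite{MaMarinescu07}. The correct argument passes through the creation/annihilation operators $b_i, b_i^+$ for the model oscillator $\Lc_0$: one rewrites each $\Oc_r$ as a polynomial in $b_i, b_i^+, z_i, \bar z_i$ and uses that $b_i$ annihilates the ground state $\mP$, while $b_i^+$ and multiplication by $z_i,\bar z_i$ each raise the polynomial degree by one. The sharp count $3r$ then falls out of the explicit form of the $\Oc_r$ in these variables (see \cite[Theorem 4.1.21 and (4.1.31)]{MaMarinescu07}). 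Your ``absorption'' heuristic does not give this, and as you yourself note, the naive count overshoots; so this step should be rewritten to invoke the creation/annihilation formalism directly rather than the informal cancellation story.
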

 
Owing to \eqref{c22}, \eqref{c27},
as in \cite[(4.1.96)]{MaMarinescu07}, we have
\begin{align}\label{0c53} P_p^0(Z,Z') =
t^{-2n}\,\kappa ^{-1/2}(Z)\,\mP_{t}(Z/t, Z'/t)\, 
\kappa ^{-1/2}(Z')\,,\, \,  \text{for all $Z,Z'\in\R^{2n}$}.
\end{align}

From Theorems \ref{tue14} and \ref{0t3.6} and \eqref{0c53},
we get the following near-diagonal expansion
of the Bergman kernels. Recall that we are working
with $t=p^{-1/2}$. 

    \begin{theorem} \label{t3.8} 
    For every $j\in \N$,
     there exists $c>0$  such that the estimate 
     \begin{multline}\label{1c53}
    \Big\arrowvert 
    \Big( \frac{1}{p^n}P_p^0 (Z,Z')
     - \sum_{r=0}^j F_{r} (\sqrt{p}Z,\sqrt{p}Z')
    \kappa ^{-1/2}(Z)\kappa^{-1/2}(Z')
    p^{-r/2}\Big) \Big\arrowvert\\
     \leqslant c  |d\phi  |_{{2j+n+3}}^{2(2j+n+2)} 
     |\omega |_{0}^{2(2n+4j+6)} 
      |d\phi  |_{n+2}^{2n+2} p^{-(j+1)/2}
  \zeta^{-2n-4j-6} 
    \end{multline}
    holds for all $0<\zeta\leq 1$, $\zeta p>\delta$,
    and all $Z,Z'\in T_{x_0}X$  with 
    $|Z|,|Z'| \leq \min\{\sigma/\sqrt{p}, \epsilon\}$.
    \end{theorem}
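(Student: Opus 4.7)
The plan is to deduce Theorem~\ref{t3.8} from Theorem~\ref{tue14} via the rescaling identity \eqref{0c53}. Substituting $t=p^{-1/2}$ into \eqref{0c53}, we obtain the pointwise identity
\begin{equation*}
\frac{1}{p^{n}}\,P_{p}^{0}(Z,Z')=\kappa^{-1/2}(Z)\,\mP_{t}\bigl(\sqrt{p}\,Z,\sqrt{p}\,Z'\bigr)\,\kappa^{-1/2}(Z'),
\end{equation*}
so \eqref{1c53} is reduced to controlling the difference between $\mP_{t}$ and its truncated $t$-expansion, evaluated at the rescaled points $\sqrt{p}\,Z, \sqrt{p}\,Z'$, up to the uniformly bounded factor $\kappa^{-1/2}(Z)\kappa^{-1/2}(Z')$.

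Next I would apply Theorem~\ref{tue14} directly at these rescaled points. The hypothesis $|Z|,|Z'|\leq \sigma/\sqrt{p}$ is exactly what is required to ensure $|\sqrt{p}\,Z|,|\sqrt{p}\,Z'|\leq \sigma$, while the alternative constraint $|Z|,|Z'|\leq \epsilon$ guarantees that $Z,Z'$ lie in the ball where the modified weight $\varphi_{\epsilon}$ coincides with $\varphi_{x_{0}}$ and $D_{p}^{X_{0}}$ restricts to $D_{p}$; this justifies identifying the rescaled kernel of $P_{p}^{0}$ with $\mP_{t}$ through \eqref{0c53}. Estimate \eqref{0ue45} then yields, at the rescaled points,
\begin{equation*}
\Bigl|\mP_{t}(\sqrt{p}\,Z,\sqrt{p}\,Z')-\sum_{r=0}^{j}F_{r}(\sqrt{p}\,Z,\sqrt{p}\,Z')\,t^{r}\Bigr|\leq c\,|d\phi|_{2j+n+3}^{2(2j+n+2)}\,|\omega|_{0}^{2(4j+2n+6)}\,|d\phi|_{n+2}^{2n+2}\,\zeta^{-4j-2n-6}\,t^{j+1}.
\end{equation*}

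To finish, I would substitute $t^{r}=p^{-r/2}$ and $t^{j+1}=p^{-(j+1)/2}$ into this display, multiply through by $\kappa^{-1/2}(Z)\kappa^{-1/2}(Z')$ (which is uniformly bounded on the relevant neighbourhood of $0$ since $\kappa$ is smooth with $\kappa(0)=1$), and absorb the resulting bounded factor into the constant $c$. This produces exactly \eqref{1c53}, with the powers of $\zeta$, $|d\phi|_{\cdot}$ and $|\omega|_{0}$ inherited from Theorem~\ref{tue14}.

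The genuine difficulty of this near-diagonal expansion was in fact already dispatched in Theorems~\ref{tu1}--\ref{tue14}: tracking how the constants $\zeta^{-1}$, $|d\phi|_{k}$ and $|\omega|_{0}$ propagate through the contour integral \eqref{1ue15}, the iterated commutator and resolvent bounds \eqref{ue18}--\eqref{ue18_ter}, and the rescaled Sobolev comparison \eqref{1ue21}. Granted those results, the only step remaining here is bookkeeping—converting between the $t$-variable and the $p$-variable, checking that the bound $|Z|,|Z'|\leq \min\{\sigma/\sqrt{p},\epsilon\}$ is precisely what transports Theorem~\ref{tue14} through \eqref{0c53}, and confirming that the exponents match. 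No new analytic input is needed beyond what has already been proved.
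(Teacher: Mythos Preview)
Your proposal is correct and matches the paper's own argument: the paper states Theorem~\ref{t3.8} immediately after \eqref{0c53} with the one-line justification ``From Theorems~\ref{tue14} and \ref{0t3.6} and \eqref{0c53}, we get the following near-diagonal expansion,'' and you have simply spelled out that substitution. The only minor over-explanation is your invocation of the constraint $|Z|,|Z'|\leq\epsilon$ to justify \eqref{0c53}; in fact \eqref{0c53} is stated in the paper for all $Z,Z'\in\R^{2n}$, so only the bound $|Z|,|Z'|\leq\sigma/\sqrt{p}$ is genuinely used here (the $\epsilon$-constraint becomes relevant later when comparing $P_{p}^{0}$ with $P_{p}$ via Proposition~\ref{p3.2}).
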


\noindent
{\bf End of the proof of Theorem \ref{tue4}.}
We apply  Theorem \ref{t3.8} to $Z=Z'=0$ and  $j=1$. 
Note that $F_1(0,0)=0$ because the function $F_1$ is odd. 
By \eqref{bk2.71}, $\mP(0,0)= \omega(x_0)^n/\theta(x_0)^n$.
So from \eqref{1c53}, we get 
  \begin{align}\label{1c56}
  \Big\| \frac{1}{p^n}P_p^0 (0,0)
  - {\omega(x_0)^n\over \theta(x_0)^n}  \Big\|_{\Cc ^{0}(X)}
\leqslant c |d\phi  |_{{n+5}}^{2n+8} |\omega |_{0}
 ^{4n+20}    |d\phi  |_{n+2}^{2n+2} \zeta^{-2n-10} p^{-1}.
       \end{align}
We then deduce the result form  
Propositions \ref{0t3.0}, \ref{p3.2} and (\ref{1c56}).
\hfill $\square$

\begin{remark} \label{Rem3.10}  \rm
Assume now $\phi\in \Cc^{n+2k+6}$. 
Then by the usual $\Cc^k$-norm on each 
$U_{j}$ and Sobolev embedding theorem, from \eqref{c19},
we get
\begin{align}\label{1c3b}
\|F_{\epsilon_0}(D_{p})(x,x') - P_{p}(x,x')\|_{\Cc^k}
\leq c \, |\omega|_{n+k}^{2n+2+2k}
 \zeta^{-6n-3l- 6-6k}  p^{-l}.
\end{align}
Note that $\nabla^{L^p}=  d + p \Gamma^L$ cf. \eqref{c10},
thus if we use the $\Cc^k$-norm induced by $\nabla^{L^p}$,
then we get 
\begin{align}\label{1c3c}\begin{split}
\|F_{\epsilon_0}(D_{p})(x,x') &- P_{p}(x,x')\|_{\Cc^k(X\times X)}\\
&\leq c \sum_{r=0}^k \, |\omega|_{n+m}^{2n+2+2r}
 \zeta^{-6n-3(k-r+1) - 6-6k_{1}} p^{-k-r-1} 
 |\omega|_{k-r}^{k-r}  p^{k-r}\\
& \leq c \, |\omega|_{n+k}^{2n+2+2k}
 \zeta^{-6n- 9-6k} p^{-1}.
\end{split}\end{align}
In the same way as \eqref{1c19} and above, we get 
\begin{align}\label{1c19b}
\Big \|(P_p^{0}- P_p)(x,x')\Big \|_{\Cc^k(X\times X)}
\leq c \, (|d \phi |_{2}^{-1}\zeta)^{-6n-6k -9}  p^{-1} 
 |\omega|_{n+k}^{2n+2+2k}.
\end{align}
Combining \cite[(4.1.64)]{MaMarinescu07}
and the argument for \eqref{ue15}, we get
 \begin{align}\label{ue15b}
&\Big\|\frac{\partial^{r}}{\partial t^{r}}
\mP_{t}\left (Z, Z'\right )\Big \|_{\Cc^{m'}}   
\leqslant c \zeta^{-2n-4(r+m')-2}
 |d\phi  |_{2(r+m')+n+1}^{4(r+m')+2n} |\omega |_{0}^{8(r+m')+4n+4}
 |d\phi |_{n+2}^{2n+2},
\end{align}
here $\Cc^{m'}$ is the ususal $\Cc^{m'}$-norm for 
the parameter $x_{0}$.

Thus we get an extension of \eqref{smp1.6},
\begin{align}\label{smp1.6b}
\begin{split}
\Big\|p^{-n}P_{p}(x,x)- {\omega(x)^n\over\theta(x)^n}
\Big\|_{\Cc^k}
\leqslant & c\, |d\phi|_{{n+2k+5}}^{2n+4k+8}
|\omega|_{0}^{4n+8k+20}
|d\phi|_{n+2}^{2n+2} \zeta^{-2n-4k -10} p^{-1}  \\
& +c |\omega|_{{n+k}}^{2n+2k+2} (|d\phi|_{2}\zeta^{-1})^{6n+9+6k}
p^{-1} .
\end{split}\end{align}
\end{remark}

\begin{remark} \rm
Let $\phi$ be a function of class $\Cc^\alpha$, $0<\alpha\leq 1$, 
which is $\omega_0$-p.s.h., i.e., $\ddc \phi+\omega_0\geq 0$. 
For each $0<\zeta\leq 1$, we can find a smooth $\omega_0$-p.s.h. 
function $\phi_\zeta$ such that 
$\|\phi_\zeta\|_{\Cc^k} \leq c \zeta^{-k+\alpha}$ and 
$\ddc\phi_\zeta+\omega_0\geq \zeta\omega_0$, 
see \cite{DMN15}. As mentioned in Introduction, we can study 
$\phi$ by applying our results to $\phi_\zeta$. Some steps in 
the proof of our estimates can be strengthened using 
$\|\phi_\zeta\|_{\Cc^k} \leq c \zeta^{-k+\alpha}$ for each 
$0\leq k\leq n+6$ instead of using only the $\Cc^{n+6}$-norm.
\end{remark}

\small

\end{document}